\documentclass[12pt, a4paper]{article}
\usepackage[english]{babel}
\usepackage[T1]{fontenc}
\usepackage{amsmath, amsfonts, amssymb, amsthm, mathtools}
\usepackage[numbers]{natbib}
\usepackage{hyperref}
\usepackage{hyperref}
\usepackage{setspace} 
\usepackage{calligra}
\usepackage{lmodern}
\usepackage{lastpage}		
\usepackage{indentfirst}
\usepackage{color}
\usepackage{tcolorbox}
\usepackage{textcomp}
\usepackage{enumerate}
\usepackage{enumitem}
\usepackage[utf8]{inputenc}
\usepackage{array}
\usepackage{pifont}
\usepackage{xspace}
\usepackage{a4wide} 
\usepackage{float}
\usepackage{indentfirst}
\usepackage{url}
\usepackage{csquotes}
\newcommand{\dif}{\mathop{}\!\mathrm{d}}

\newcommand{\wl}{\mathop{}\!\mathrm{w}\hspace{0.00 cm}\text{-}\hspace{-0.17 cm}}

\newtheorem{theorem}{Theorem}[section]  

\newtheorem{lemma}{Lemma}[section]
\newtheorem{definition}{Definition}[section]
\newtheorem{remark}{Remark}[section]     
\newtheorem{example}{Example}[section]

\title{A New Constraint Qualification for Continuous-Time Nonlinear Programming Based on Asymptotic KKT Conditions}

\author{
Rodrigo B. Moreira\thanks{Department of Exact Sciences, State University of Santa Cruz (UESC), Ilhéus, Bahia, Brazil. Email: \texttt{rbmoreira@uesc.br}}
\and
Mois\'es R. C. do Monte\thanks{Pontal Institute of Exact and Natural Sciences, Federal University of Uberlandia (UFU), Ituiutaba, Minas Gerais, Brazil. Email: \texttt{moises.monte@ufu.br}}
\and
Valeriano A. de Oliveira\thanks{Department of Mathematics, Sao Paulo State University (UNESP), São José do Rio Preto, São Paulo, Brazil. Email: \texttt{valeriano.oliveira@unesp.br}}
}
\date{}

\begin{document}
\maketitle

\vspace{-0.4cm}
\begin{abstract}
The asymptotic Karush-Kuhn-Tucker (AKKT) optimality conditions are distinguished from other approaches in the literature by virtue of their capacity to be effectively derived through numerical methods, such as the utilization of an appropriate version of the augmented Lagrange method. These tools are of a theoretical nature, yet they possess practical utility in the identification of candidate solutions to continuous-time programming problems. While this type of optimality condition is valid without imposing any constraint qualification, it is not sufficiently robust to generate good candidate solutions. In some cases, solutions satisfying AKKT conditions are not even stationary. In this study, we investigate conditions that effectively refine this set of candidate solutions with the same precision as the classical Karush-Kuhn-Tucker (KKT) conditions. This is achieved by introducing a novel constraint qualification, designated AKKT-regularity. It has been demonstrated that, under AKKT-regularity, each local optimal solution is shown to satisfy the KKT conditions. In addition, it is demonstrated that this constraint qualification is the weakest possible to ensure such a property. Furthermore, sufficient conditions are provided for its applicability.

\noindent
\textbf{Keywords:} Continuous-time programming, Constraint qualifications, Optimality conditions, Asymptotic KKT. 
\end{abstract}

\maketitle
\section{Introduction}\label{sec1}

The development of continuous-time mathematical programming problems as a formal branch of study can be traced back to the seminal works \cite{bellman:1953,bellman:1957} of Richard Bellman in the 1950s, particularly the introduction of dynamic programming and the Bellman equation. Bellman's methodology provided a robust recursive framework for addressing optimization problems that evolve over continuous time periods. This development represented a substantial advancement from earlier approaches that were rooted in the calculus of variations and control theory.

Contemporary applications of continuous-time mathematical programming and dynamic programming, as pioneered by Bellman, span diverse fields such as engineering process control, finance, economics, and operations research. These models are particularly important for process scheduling, optimal portfolio allocation, resource management, and policy optimization in dynamic, time-dependent environments.

Following the publication of Bellman's works, a significant amount of resources were developed, incorporating a variety of theoretical developments, which ultimately led to the publication of Reiland's papers \cite{reiland1:1980,reiland2:1980,reiland:1980}. Subsequent to this, Zalmai made a significant contribution, as evidenced in the references \cite{zalmai2:1985,zalmai:1985,zalmai:1990}.

The current state of the art in continuous-time optimization encompasses advanced optimality conditions, refined duality theory, and increasingly sophisticated numerical methods. These developments mirror classical results from finite-dimensional optimization while addressing the complexities introduced by the infinite-dimensional and dynamic nature of continuous-time problems. Recent research has established first- and second-order necessary optimality conditions of the Karush-Kuhn-Tucker (KKT) type for continuous-time problems with equality and inequality constraints, thereby generalizing well-known results from nonlinear programming. Such conditions frequently involve regularity and constraint qualifications, including full rank conditions, constant rank conditions and extensions of Mangasarian-Fromovitz constraint qualification. These conditions have become essential tools in analyzing both the theoretical and practical aspects of continuous-time programming. In addressing the concept of duality, significant progress has been made in establishing weak and strong duality theorems for a wide class of continuous-time linear and convex optimization problems. The reader is referred to \cite{deOliveira:2024,monte:2020,monte:2021,monte:2019,Jovic:2021b,Jovic:2022a,Jovic:2022b,Vicanovic:2023b,Vicanovic:2023a,Vicanovic:2025} for theoretical developments and \cite{and:2004b,doMonte2026,pullan:1993,weiss:2008,wen:2012,wu:2016} for numerical methods.

Among the recent developments, one has the sequential optimality conditions, that play a pivotal role in the field of optimization, as they establish the fundamental requirements for optimality, derived from sequences of approximate solutions that converge to the true solution. In contrast to classical pointwise conditions, these conditions do not rely on strict constraint qualifications, thereby ensuring their applicability even in circumstances where such conditions may be violated. These sequential conditions serve as practical stopping criteria for iterative algorithms and ensure convergence to meaningful candidates for optimality. This kind of optimality conditions was introduced in \cite{and:2011} for classic mathematical programming problems in finite dimensions. Subsequently, the aforementioned concept was expanded to encompass optimization problems posed in Banach spaces, as evidenced in the works \cite{Borgens2019, boorgens:2020,Kanzow:2018}. This generalization was further extended to optimal control problems, as discussed in \cite{Moreira2024}, and, most recently, to continuous-time programming, as delineated in \cite{doMonte2026}.

Constraint qualifications play a critical role in sequential optimality conditions by ensuring that points satisfying these sequential conditions also meet classical KKT conditions. While sequential optimality conditions hold at local minimizers regardless of constraint qualifications, the presence of appropriate constraint qualifications (known as strict constraint qualifications) guarantees that limits of sequences satisfying sequential conditions are indeed KKT points. This connection is vital for establishing a link between practical algorithmic stopping criteria and rigorous theoretical optimality guarantees. Furthermore, the identification of the weakest strict constraint qualifications associated with a given sequential optimality condition facilitates the formulation of algorithms that are both broadly applicable and capable of ensuring convergence to meaningful stationary points, despite degeneracies or irregularities in the problem constraints. For instance, in the context of mathematical programming problems, the cone-continuity property (CCP) was introduced in \cite{andreani:2016}. It is the weakest constraint qualification under which solutions satisfying the approximate Karush-Kuhn-Tucker optimality conditions (AKKT) also satisfy the classic KKT conditions. In the context of infinite-dimensional spaces, AKKT-regularity was developed as a generalization of CCP by \cite{boorgens:2020}. In a similar vein, AWMP-regularity was proposed in the context of optimal control problems in \cite{Moreira2026}, as the weakest constraint qualification under which the conditions of the asymptotic weak maximum principle imply the conditions of the classic weak maximum principle. In this paper, we propose a strict type constraint qualification, also called AKKT-regularity, for continuous-time programming. We establish that every solution that satisfies the asymptotic KKT conditions automatically satisfies the KKT conditions when the AKKT-regularity is valid. We further demonstrate that AKKT-regularity constitutes the minimal constraint qualification that ensures this property.

The structure of the paper is delineated as such. Subsequently, a series of fundamental preliminaries are outlined. AKKT-regularity, the new constraint qualification, is proposed in Section \ref{NewCQ}. Section \ref{SuffCrit} is dedicated to give some sufficient criteria to the validity of the new constraint qualification. Conclusions of the paper are given in the last section.

\section{Preliminaries}\label{Preliminaries}

This work considers the following continuous-time programming problem, which incorporates both equality and inequality constraints:
$$
\begin{array}{ll} 
\mbox{minimize} & P(x) = \displaystyle\int_{0}^{T} \phi(x(t),t) \dif t \\ 
\mbox{subject to} & h(x(t),t)= 0 ~ \mbox{a.e.} ~ t \in [0,T], \\
& g(x(t),t) \leq 0 ~ \mbox{a.e.} ~ t \in [0,T], \\
& x \in  L^{\infty}([0,T];\mathbb{R}^{n}),
\end{array}
\eqno{\text{(CTP)}}
$$
where $\phi : \mathbb{R}^{n} \times [0,T] \rightarrow \mathbb{R}$, $h = (h_1,h_2,\ldots,h_p) : \mathbb{R}^{n} \times [0,T] \rightarrow \mathbb{R}^{p}$ and $g= (g_1,g_2,\ldots,g_m) : \mathbb{R}^{n} \times [0,T] \rightarrow \mathbb{R}^m$ are given functions. The set of equality and inequality constraints indices are denoted, respectively, by 
$$
I = \{1,2,\ldots,p\} \quad \text{and} \quad J = \{1,2,\ldots,m\}.
$$

Throughout this paper, all integrals are assumed to be taken in the Lebesgue sense, and inequalities between vectors are to be interpreted component by component. We denote strong, weak, and weak-* convergence by $\to$, $\rightharpoonup$, and $\rightharpoonup^{\ast}$, respectively. 

The feasible set of the problem (CTP) is represented by $\Omega$ and defined as
\begin{align*}
    \Omega = \{ x \in L^{\infty}([0,T];\mathbb{R}^{n}) : h(x(t),t) = 0, ~ g(x(t),t) \leq 0 ~ \text{for a.e.} ~ t \in [0,T]\}.
\end{align*}

Given a real number $r > 0$ and a feasible solution $\bar{x} \in \Omega$, we will denote 
$$
B_{r}(t) = \{ x \in \mathbb{R}^{n} : \| x - \bar{x}(t) \| < r  ~ \mbox{a.e.} ~ t \in [0,T]\}
$$
and 
$$\bar{B}_{r}(t) = \{x \in \mathbb{R}^{n}  :  \| x-\bar{x}(t) \| \leq r \}.$$

The following definition formalizes the concepts of local and global optimal solutions for the continuous-time programming problem (CTP).
\begin{definition} 
\label{local-global} 
    A feasible solution $\bar{x} \in \Omega$ is called a local optimal solution of (CTP) if there exists $r > 0$ such that $P(\bar{x}) \leq P(x)$ for every $x \in \Omega$ satisfying $x(t) \in B_{r}(t)$ for almost every $t \in [0,T]$. Moreover, $\bar{x} \in \Omega$ is referred to as a global optimal solution if $P(\bar{x}) \leq P(x)$ for all $x \in \Omega$.
\end{definition}

\begin{definition}
The Lagrangian function $L : \mathbb{R}^{n} \times \mathbb{R}^{p} \times\mathbb{R}^{m} \times [0,T] \rightarrow \mathbb{R}$ associated to (CTP) is defined as 
$$
L(x,u,v,t) = \phi(x,t) + \sum_{i=1}^{p} u_{i}h_{i}(x,t) + \sum_{j=1}^{m} v_{j}g_{j}(x,t) ~ \mbox{a.e.} ~ t \in [0,T].
$$
\end{definition}

The basic assumptions will be said to be satisfied at $\bar{x} \in \Omega$ if there exists $r>0$ such that:
\begin{itemize}
\item[(H1)] $\phi(\cdot,t)$ is continuously differentiable a.e. $t \in [0,T]$;  
\item[] $\phi(x,\cdot)$ is Lebesgue measurable for each $x$;
\item[] $\vert \phi(\bar{x}(\cdot),\cdot) \vert$ is integrable on $[0,T]$;
\item[] there exist integrable functions $c_{\phi}$ and $\hat{c}_{\phi}$ on $[0,T]$ such that $$\vert \phi(x,t) - \phi(y,t) \vert \leq c_{\phi}(t) \Vert x - y \Vert ~ \forall \, x,y \in \bar{B}_{r}(t) ~ \text{a.e.} ~ t \in [0,T]$$ and $$\Vert \nabla_{x}\phi(x,t)\Vert \leq \hat{c}_{\phi}(t)~ \forall \, x \in \bar{B}_{r}(t) ~ \text{a.e.} ~ t \in [0,T];$$

\item[(H2)] $(h,g)(\cdot,t)$ is continuously differentiable a.e. $t \in [0,T]$; 
\item[] $(h,g)(x,\cdot)$ is Lebesgue measurable for each $x$; 
\item[] $\Vert (h,g)(\bar{x}(\cdot),\cdot) \Vert$ is integrable on $[0,T]$;
\item[] there exist integrable functions $c_{h,g}$ and $\hat{c}_{h,g}$ on $[0,T]$ such that $$\Vert (h,g)(x,t) - (h,g)(y,t) \Vert \leq c_{h,g}(t) \Vert x - y \Vert ~ \forall \, x,y \in \bar{B}_{r}(t) ~ \text{a.e.} ~ t \in [0,T]$$ and $$\Vert \nabla_{x}(h, g)(x,t)\Vert \leq \hat{c}_{h,g}(t)~ \forall \, x \in \bar{B}_{r}(t) ~ \text{a.e.} ~ t \in [0,T].$$
\end{itemize}

\begin{remark}
Clearly, the assertions below are a direct consequence of (H1)--(H2):
\begin{itemize}
\item[(A1)] there exists an integrable function $k_{\phi}$ such that
$$
\vert \phi(x,t) \vert \leq k_{\phi}(t) ~ \forall \, x \in \bar{B}_{r}(t) ~ \mbox{a.e.} ~ t \in [0,T];
$$ 
\item[(A2)] there exists an integrable function $k_{h,g}$ such that
$$
\Vert (h,g)(x,t) \Vert \leq k_{h,g}(t) ~ \forall \, x \in \bar{B}_{r}(t) ~ \mbox{a.e.} ~ t \in [0,T].
$$
\end{itemize}
\end{remark}

Below, we present the traditional (exact) KKT conditions for (CTP). The aforementioned conditions were established under different constraint qualifications, as can be seen in the works of \citeauthor{monte:2019} \cite{monte:2020,monte:2021,monte:2019}.

\begin{theorem} \label{KKT}
Assume that (H1) and (H2) are valid at $\bar{x} \in \Omega$. If $\bar{x}$ is a local optimal solution of (CTP) and a constraint qualification is satisfied, then there exist $u \in L^{\infty}([0,T];\mathbb{R}^p)$ and $v \in L^{\infty}([0,T];\mathbb{R}^m)$ such that, for almost every $t \in [0,T]$,
\begin{eqnarray}
&& \nabla_x L(\bar{x}(t),u(t),v(t),t) = 0, \label{KKT1} \\
&& v_{j}(t)g_{j}(\bar{x}(t),t)=0, ~ j  \in J, \label{KKT2} \\
&& v_j(t) \geq 0, ~ j  \in J. \label{KKT3}
\end{eqnarray}
\end{theorem}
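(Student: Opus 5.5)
This statement is cited from the works of \citeauthor{monte:2019}, and "a constraint qualification" is left unspecified. So the natural plan is to fix a concrete one, for instance a uniform full-rank (linear independence) condition. Concretely, we assume there is $\gamma>0$ such that, for almost every $t$, the Gram matrix of the gradients $\nabla_x h_i(\bar x(t),t)$, $i\in I$, and $\nabla_x g_j(\bar x(t),t)$, $j$ in an $\varepsilon$-active set $J_\varepsilon(t)=\{j: g_j(\bar x(t),t)\ge -\varepsilon\}$, has smallest eigenvalue at least $\gamma$. We then derive \eqref{KKT1}--\eqref{KKT3} through a first-order analysis in $L^\infty$.

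\textbf{Step 1: a penalty/linearization argument.} We show that no direction $d\in L^\infty([0,T];\mathbb{R}^n)$ can satisfy all three of
\[
\int_0^T\nabla_x\phi(\bar x(t),t)^\top d(t)\,\dif t<0,\qquad \nabla_x h(\bar x(t),t)d(t)=0,\qquad \nabla_x g_j(\bar x(t),t)d(t)\le -\delta \ \text{on } J_\varepsilon(t).
\]
If such a $d$ existed, the uniform rank condition together with an implicit-function/Newton correction applied pointwise would give feasible curves $x_s=\bar x+sd+o(s)$ in $L^\infty$. Here (H1)--(H2) and dominated convergence control the remainders uniformly in $t$. These curves would then contradict local optimality in the sense of Definition \ref{local-global}.

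\textbf{Step 2: pointwise multipliers.} For a.e.\ $t$, the system of Step 1 read pointwise is a finite-dimensional Motzkin/Farkas alternative. It yields $u(t)$ and $v(t)\ge0$, with $v_j(t)=0$ for $j\notin J_0(t)$, solving \eqref{KKT1}; complementarity \eqref{KKT2} and sign \eqref{KKT3} then hold by construction. To make this rigorous we instead derive, from the integral inequality of Step 1, a measurable multiplier. We take a minimal-norm solution
\[
(u(t),v(t))=\operatorname{argmin}\bigl\{\|(u,v)\|: \nabla_x L(\bar x(t),u,v,t)=0,\ v\ge0,\ v_j=0\ (j\notin J_0(t))\bigr\}.
\]
Measurability follows from a measurable selection theorem, since the constraint set is a measurable closed-valued multifunction.

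\textbf{Step 3 (main obstacle): essential boundedness.} The key inequality uses the uniform rank bound $\gamma$. From \eqref{KKT1}, $\|(u(t),v(t))\|\le \gamma^{-1/2}\|\nabla_x\phi(\bar x(t),t)\|$. However, (H1) only gives $\hat c_\phi\in L^1$, not $L^\infty$. So we would either additionally assume essential boundedness of the gradients, as in the cited works, or conclude only integrability and then argue that the CQ in use includes such boundedness. This uniformity is the delicate point: a pointwise constraint qualification alone does not suffice. It is exactly why the paper later introduces AKKT-regularity, under which one passes to limits of approximate multipliers in a weak-$*$ sense.
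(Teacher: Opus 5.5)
The paper gives no proof of this statement. It is quoted from the cited works, each of which fixes a specific constraint qualification with its own standing hypotheses. Inside the paper it is only obtained for AKKT-regularity (Theorem~\ref{Theo_1}, via Theorem~\ref{AKKT_opt_cond} and Lemmas~\ref{Lem_M_KKT} and~\ref{Lema_pw_AKKT}).

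Judged on its own, your proposal has a genuine gap. The main one is the obstacle you identify in Step~3 but leave open (``either \dots\ or \dots''), so $u,v\in L^\infty$ is never reached. No better argument can remove it. With (H1) as written ($\hat c_\phi$ only integrable), take $T=1$, $n=p=1$, no inequality constraints (or $g\equiv-1$), $\phi(x,t)=t^{-1/2}x$ and $h(x,t)=x$. Then (H1)--(H2) hold, and $\Omega=\{0\}$, so $\bar x=0$ is optimal. Your uniform rank condition holds with $\gamma=1$, yet \eqref{KKT1} forces $u(t)=-t^{-1/2}\notin L^\infty$.

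Hence no constraint qualification satisfied by $h(x,t)=x$ (full rank, MFCQ or constant-rank type) yields the statement under (H1)--(H2) as stated, and your second option is unavailable. Consistently, AKKT-regularity fails in this example, since $\mathcal{M}(\bar x,0)=L^\infty$ is not weakly closed in $L^1$. You have to commit to the first option and add the hypothesis $\nabla_x\phi(\bar x(\cdot),\cdot)\in L^\infty$, which changes the statement. Then your bound $\Vert(u(t),v(t))\Vert\le\gamma^{-1/2}\Vert\nabla_x\phi(\bar x(t),t)\Vert$ finishes.

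Steps~1--2 also fail as written. Feasibility of $x_s$ must hold pointwise a.e.\ for one fixed $s$. (H2) gives neither uniform Lipschitz constants ($c_{h,g}$ is only integrable) nor a uniform modulus of continuity of $\nabla_x(h,g)(\cdot,t)$. Dominated convergence controls integrals, which suffices for expanding $P$ but not for the constraints. For example, $g(x,t)=-1+t^{-1/2}x$ is strictly inactive at $\bar x=0$, yet $x\equiv s$ violates it on $(0,s^2)$ for every $s>0$.

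The standard remedy is localization. If the pointwise linearized system is solvable for $t$ in a set of positive measure, choose a measurable solution $d$ and a subset $S'$ of positive measure on which the following are uniformly controlled:
$\Vert d(t)\Vert$, $c_{h,g}(t)$, the slack of the inactive constraints, and (by Egorov) the modulus of continuity of the gradients.
Then perturb only on $S'$. Your formulation of Step~1 is incompatible with this. Requiring $\nabla_x g_j(\bar x(t),t)d(t)\le-\delta$ for $j\in J_\varepsilon(t)$ and a.e.\ $t$ excludes $d=0$ off $S'$.

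Moreover, an alternative built on $J_\varepsilon(t)$ only yields multipliers supported on $J_\varepsilon(t)$, so \eqref{KKT2} does not follow ``by construction''. The switch to $J_0(t)$ in Step~2 is unjustified; it would need $\varepsilon\to0$ and a weak-$*$ limit. With localization you can use the active set $J_0(t)$ directly. Motzkin's alternative plus the rank condition then give the unique multiplier $w(t)=-(A(t)A(t)^\top)^{-1}A(t)\nabla_x\phi(\bar x(t),t)$, where $A(t)$ stacks $\nabla_x h(\bar x(t),t)$ and the active $\nabla_x g_j(\bar x(t),t)$. This multiplier is measurable without any selection theorem, and the $\varepsilon$-active set is not needed at all.
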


\begin{definition}
Let $\bar{x} \in \Omega$ such that (H1) and (H2) hold. We say that $\bar{x}$ is a KKT solution of (CTP) if there exist $u \in L^{\infty}([0,T];\mathbb{R}^p)$ and $v \in L^{\infty}([0,T];\mathbb{R}^m)$ such that conditions \eqref{KKT1}--\eqref{KKT3} are valid.
\end{definition}

In the sequel, we provide the definitions of AKKT sequences and solutions for (CTP). These definitions were initially established in \citeauthor{doMonte2026} in \cite{doMonte2026}.

\begin{definition} \label{Def_AKKT}
Let $\bar{x} \in \Omega$ such that (H1) and (H2) hold. A sequence $\{(x^{k}, u^{k}, v^{k})\} \subset L^{\infty}([0, T]; \mathbb{R}^{n} \times \mathbb{R}^{p} \times \mathbb{R}^{m})$ is called an \emph{Asymptotic Karush-Kuhn-Tucker} sequence (or simply \emph{AKKT}) for (CTP) at $\bar{x}$ if
\begin{align*}
    &\int_0^T \nabla_{x}L(x^{k}(t),u^{k}(t),v^{k}(t),t) \cdot \gamma(t) \dif t \to 0 ~ \forall \gamma \in L^{\infty}([0,T];\mathbb{R}^{n}), \\ 
    &v_{j}^{k}(t)g_{j}^{-}(x^{k}(t),t) \to 0 ~ \text{a.e.} ~ t \in [0,T], ~ j \in J, \\
    &v_j^{k}(t) \geq 0 ~ \text{a.e.} ~ t \in [0,T], ~ j \in J,
\end{align*}
where $g_j^{-}(x,t) := \max \{ -g_j(x,t),0 \}, ~ j \in J$, a.e. $t \in [0,T]$.
\end{definition}

\begin{remark} \label{remark}
Note that
$$
\int_0^T \nabla_{x}L(x^{k}(t),u^{k}(t),v^{k}(t),t) \cdot \gamma(t) \dif t \to 0 ~ \forall \gamma \in L^{\infty}([0,T];\mathbb{R}^{n})
$$
is equivalent to
$$
\nabla_{x}L(x^{k}(\cdot),u^{k}(\cdot),v^{k}(\cdot),\cdot) \rightharpoonup 0 ~ \text{in} ~ L^1([0,T];\mathbb{R}^n).
$$
\end{remark}

\begin{definition} \label{Def_pw_AKKT}
Let $\bar{x} \in \Omega$ such that (H1) and (H2) are valid. The feasible solution $\bar{x} \in \Omega$ is said to be a \emph{pointwise asymptotic KKT} (or simply \emph{pw-AKKT}) solution for (CTP) if there exists an AKKT sequence $\{(x^{k}, u^{k}, v^{k})\}$ for (CTP) at $\bar{x}$ such that
\begin{align*}
x^{k}(t) \to \bar{x}(t) ~\text{a.e.} ~ t \in [0, T]
\end{align*}
and
\begin{align*}
x^{k}(t) \in B_{r}(t) ~\text{a.e.} ~ t \in [0, T] ~ \forall \, k \in \mathbb{N}.
\end{align*}
\end{definition}

Next, we present the result from \cite{doMonte2026} that asserts the pw-AKKT can be regarded as a genuine necessary optimality condition. Notably, no constraint qualification is imposed.

\begin{theorem} \label{AKKT_opt_cond}
Let $\bar{x}$ be a local optimal solution for (CTP), and suppose that (H1) and (H2) are satisfied. Then, $\bar{x}$ is a pw-AKKT solution for (CTP).
\end{theorem}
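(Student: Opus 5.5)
We will prove Theorem \ref{AKKT_opt_cond} with a quadratic penalty and localization argument, the continuous-time version of the classical proof of \cite{and:2011}. Fix the radius $r>0$ for which both the local optimality of $\bar{x}$ and (H1)--(H2) hold. Shrinking $r$ if necessary, we work on the closed set
$$
\mathcal{B} = \{x \in L^\infty([0,T];\mathbb{R}^n) : x(t) \in \bar{B}_{r/2}(t) ~ \text{a.e.} ~ t\in[0,T]\}.
$$
For each $k \in \mathbb{N}$ we consider the penalized, localized problem
$$
\min_{x \in \mathcal{B}} \; F_k(x) = \int_0^T \Big[ \phi(x(t),t) + \tfrac{k}{2}\big(\|h(x(t),t)\|^2 + \|g^+(x(t),t)\|^2\big) + \tfrac{1}{2}\|x(t)-\bar{x}(t)\|^2 \Big]\dif t ,
$$
where $g^+=\max\{g,0\}$ componentwise. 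The functional is well defined and finite on $\mathcal{B}$ by (A1)--(A2). We will not try to minimize over $L^\infty$ directly, because existence of minimizers there is delicate. Instead we pass to a \emph{pointwise} problem. Since the integrand is a normal (Carathéodory) integrand, for a.e. $t$ we minimize $\psi_k(x,t)$, the bracketed integrand, over the compact set $\bar{B}_{r/2}(t)$. A measurable selection theorem (Kuratowski--Ryll-Nardzewski / Aumann) then yields a measurable $x^k(t)$ attaining the pointwise minimum. Because $x^k(t)$ lies in $\bar{B}_{r/2}(t)$, we get $x^k\in L^\infty$, and $x^k$ minimizes $F_k$ over $\mathcal{B}$.

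\emph{Convergence.} From $F_k(x^k)\le F_k(\bar{x})=P(\bar{x})$ and the bound $\phi \ge -k_\phi$, we obtain $\int_0^T(\|h(x^k,t)\|^2+\|g^+(x^k,t)\|^2)\dif t = O(1/k)$.

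A pointwise argument is cleaner here. For a.e. $t$, the value $\psi_k(x^k(t),t)\le \psi_k(\bar{x}(t),t)=\phi(\bar{x}(t),t)$. Hence any cluster point $z$ of $x^k(t)$ is feasible for the pointwise constraints $h(z,t)=0$, $g(z,t)\le 0$, and satisfies $\phi(z,t)+\frac12\|z-\bar{x}(t)\|^2\le\phi(\bar{x}(t),t)$.

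To conclude $z=\bar{x}(t)$, we need the pointwise local optimality of $\bar{x}(t)$: for a.e. $t$, $\phi(\bar{x}(t),t)\le \phi(z,t)$ for every feasible $z\in\bar{B}_{r/2}(t)$. We will derive this from Definition \ref{local-global} by the standard contradiction argument. If it failed on a set of positive measure, a measurable selection of better feasible points on that set, glued to $\bar{x}$ elsewhere, would give a feasible $x$ in $B_r$ with $P(x)<P(\bar{x})$.

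With pointwise optimality in hand, every cluster point equals $\bar{x}(t)$, so $x^k(t)\to\bar{x}(t)$ a.e. In particular, for large $k$ (depending on $t$) the point $x^k(t)$ is interior to the ball. Since $\|x^k(t)-\bar x(t)\|\le r/2<r$, we also have $x^k(t)\in B_r(t)$ for every $k$.

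\emph{Multipliers.} Define $u^k=k\,h(x^k(\cdot),\cdot)$ and $v^k=k\,g^+(x^k(\cdot),\cdot)\ge0$. Both are measurable and bounded by $k\,k_{h,g}(t)$. That bound is only integrable, not essentially bounded, so to place the multipliers in $L^\infty$ we either use a truncation argument or note that $(h,g)$ is continuous on the compact ball. The main obstacle is the weak $L^1$ convergence of $\nabla_x L(x^k,u^k,v^k,\cdot)$ to $0$, as in Remark \ref{remark}.

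Where $x^k(t)$ is interior, the pointwise first-order condition gives
$$
\nabla_xL(x^k(t),u^k(t),v^k(t),t) = -(x^k(t)-\bar{x}(t)).
$$
At boundary points, a normal-cone term appears instead. The plan is as follows:
\begin{itemize}
\item The set of $t$ where $x^k(t)$ is on the boundary has measure tending to $0$, because $x^k\to\bar{x}$ a.e. together with Egorov's theorem.
\item On the complement, the gradient tends to $0$ a.e. and is dominated by $r$, so dominated convergence gives $\int\nabla_xL\cdot\gamma\to0$.
\item On the boundary set, the gradient cannot be controlled uniformly.
\end{itemize}
To deal with the boundary set, we instead redefine $x^k$ there. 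Using Egorov's theorem, we choose $k$-dependent sets $E_k$ with $|E_k|\to0$ outside of which $x^k(t)$ is interior. On $E_k$ we set $x^k=\bar{x}$ and $u^k=v^k=0$. Then $\nabla_xL=\nabla_x\phi(\bar{x}(t),t)$ on $E_k$, which is bounded by the integrable function $\hat c_\phi$, and its integral over the shrinking sets $E_k$ tends to $0$.

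\emph{Complementarity.} We check $v^k_j g_j^-(x^k)\to0$ directly from the definitions. Where $g_j(x^k(t),t)>0$, we have $g_j^-(x^k(t),t)=0$. Where $g_j(x^k(t),t)\le0$, we have $v^k_j(t)=0$. So the product vanishes identically.

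The step we expect to be hardest is the measurable-selection and localization bookkeeping: establishing pointwise optimality from Definition \ref{local-global}, and handling the boundary set $E_k$ while keeping all objects in $L^\infty$.
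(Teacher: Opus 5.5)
The paper does not prove this theorem itself. It imports it from \cite{doMonte2026}, and the only information it gives about that proof is the remark that the sequence built there satisfies $x^k(t)\in B_r(t)$. Judged as a self-contained proof, your penalty--localization argument is correct, and it is a sensible continuous-time transcription of the finite-dimensional proof. Minimizing $\psi_k(\cdot,t)$ pointwise over $\bar B_{r/2}(t)$, with a measurable argmin selection, avoids any existence question in $L^\infty$. The gluing argument correctly turns Definition~\ref{local-global} into pointwise local optimality: the set of times admitting a strictly better feasible point in $\bar B_{r/2}(t)$ is Lebesgue measurable as the projection of a jointly measurable set, and Aumann's theorem supplies the selection. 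The replacement on $E_k$ is exactly what is needed, because interiority of $x^k(t)$ only holds for $k\ge k(t)$. Your construction even gives more than the definition asks for: $v^k_jg^-_j(x^k)\equiv0$, and $\|x^k(t)-\bar x(t)\|\le r/2$ for all $k$, which is the localization the paper had to recover by re-reading the cited proof. Moreover,
\[
\int_0^T\|\nabla_xL(x^k(t),u^k(t),v^k(t),t)\|\dif t\le\int_{[0,T]\setminus E_k}\|x^k(t)-\bar x(t)\|\dif t+\int_{E_k}\hat c_\phi(t)\dif t\to0,
\]
so you get strong rather than weak convergence in $L^1$.

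Three loose ends should be fixed.

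First, continuity of $(h,g)(\cdot,t)$ on the compact ball bounds the multipliers for each fixed $t$, but not essentially in $t$. You only have $\|(u^k(t),v^k(t))\|\le k\,c_{h,g}(t)\,r/2$, which is merely integrable. Simply capping $u^k,v^k$ at a level $M_k$ is not safe either. On the capped set, $\nabla_xL$ loses the identity $\nabla_xL=-(x^k-\bar x)$ and is only bounded by $\hat c_\phi+M_k\hat c_{h,g}$, whose integral over that set need not tend to $0$. Instead, enlarge $E_k$ by $G_k:=\{t:\|(u^k(t),v^k(t))\|>M_k\}$, choosing $M_k$ so that $|G_k|<1/k$; this is possible because $(u^k,v^k)$ is finite a.e. Then perform the same replacement $x^k=\bar x$, $u^k=v^k=0$ on $G_k$; every estimate above is unchanged.

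Second, $F_k$ need not be finite on $\mathcal B$, because (A2) only gives $\|h\|^2+\|g^+\|^2\le k_{h,g}^2$ with $k_{h,g}\in L^1$. This is harmless, since you only use the pointwise inequality $\psi_k(x^k(t),t)\le\phi(\bar x(t),t)$, but the claim should be dropped.

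Third, Egorov's theorem is unnecessary. The boundary set $D_k=\{t:\|x^k(t)-\bar x(t)\|=r/2\}$ satisfies $|D_k|\to0$ directly, because a.e.\ convergence on $[0,T]$ implies convergence in measure. So you can simply take $E_k=D_k\cup G_k$.
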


\begin{remark}
It is important to note that the Definition \ref{Def_pw_AKKT} differs slightly from the corresponding one given in \citeauthor{doMonte2026} \cite{doMonte2026}. Due to technical reasons, in this study, it was necessary to add the condition $x^{k}(t) \in B_{r}(t)$ for almost every $t \in [0,T]$, for all $k \in \mathbb{N}$. A re-examination of the proof of the preceding theorem (as provided in \cite{doMonte2026}) reveals that, while this condition was not explicitly mentioned in the definition of AKKT solutions in \cite{doMonte2026}, it is nevertheless verified.
\end{remark}

\section{A new constraint qualification}\label{NewCQ}

In this section, the AKKT regularity condition is defined. It is demonstrated that this is a constraint qualification, i.e., under AKKT-regularity, every local optimal solution satisfies the traditional KKT conditions. In addition, we demonstrate that every pw-AKKT solution is indeed a KKT solution, provided that the AKKT-regularity holds. Furthermore, it is demonstrated that AKKT-regularity constitutes the weakest constraint qualification ensuring that every pw-AKKT point is a KKT point.

Despite the fact that the pw-AKKT condition is naturally connected to stopping criteria in practical numerical methods and constitutes a genuine optimality condition, it is still necessary to investigate constraint qualifications ensuring that every pw-AKKT solution is also a KKT solution. The reason is that, for several important classes of optimization problems, the pw-AKKT condition alone may fail to provide a reliable characterization of optimality. In particular, there may exist pw-AKKT solutions that are not related to the actual solution structure of the problem, being neither an optimal solution nor a KKT solution. The following example, inspired by a simplified continuous-time MPCC (\textit{Mathematical Programs with Complementarity Constraints}) setting, illustrates this phenomenon.
\begin{example}
\label{Example_1}
We consider the continuous-time problem below:
\begin{align*}
    \begin{array}{ll}
    \text{minimize} & P(x) = \displaystyle{\int_0^1 x_2(t) \dif t} \\
    \text{subject to} & - x_1(t) \leq 0 ~ \text{a.e.} ~ t \in [0,1], \\
    & x_1(t)x_2(t) = 0 ~ \text{a.e.} ~ t \in [0,1], \\
    & x \in L^\infty([0,1];\mathbb{R}^2).
    \end{array}
\end{align*}
It is not difficult to verify that assumptions (H1) and (H2) are satisfied. Note that $\bar{x}(t) := (0,1)$ for all $t \in [0,1]$ is a pw-AKKT solution. Indeed, this follows by considering the sequence $x_1^{k}(t) = -1/k$, $x_2^{k}(t) = 1$ for all $k \in \mathbb{N}$ and $t \in [0,1]$, and the multipliers $u^{k}(t) = k$ and $v^{k}(t) = k$ for all $k \in \mathbb{N}$ and $t \in [0,1]$. However, the solution $\bar{x}$ does not have a relation to the minimization problem, since it is neither an optimal solution nor a KKT solution. 
\end{example}

Let $\bar{x} \in \Omega$ such that (H1) and (H2) are satisfied. We  define the following sets:
\begin{align*}
    E := \left\{ x \in L^{\infty}([0,T];\mathbb{R}^{n}) : x(t) \in B_{r}(t) \text{ a.e. } t \in [0, T] \right\}
\end{align*}
and
\begin{align*}
    F := \{\theta \in L^{1}([0, T]; \mathbb{R}^{m}) : \theta(t) \geq 0 \text{ a.e. } t \in [0, T]\}.
\end{align*}
We will now introduce the following set-valued map, which plays a central role in the formulation of AKKT-regularity. Furthermore, both pw-AKKT and KKT conditions can be characterized by means of it. The set-valued map in question $\mathcal{M}: E \times F \rightsquigarrow L^{1}([0, T]; \mathbb{R}^{n})$ is given by
\begin{align*}
    \mathcal{M}(x, \theta) &:= \left\{
    \psi \in L^{1}([0, T]; \mathbb{R}^{n}) ~:~ \right.\\
    &\qquad \psi(t) = \sum_{i=1}^{p}u_{i}(t)\nabla_{x}h_{i}(x(t), t) + \sum_{j=1}^{m}v_{j}(t)\nabla_{x}g_{j}(x(t), t) \text{ a.e. } t \in [0, T],\\
    &\qquad (u, v) \in L^{\infty}([0, T]; \mathbb{R}^{p}\times \mathbb{R}^{m}), \\
    &\qquad \left.v_{j}(t)g_{j}^{-}(x(t), t) = \theta_{j}(t), ~ v_{j}(t) \geq 0, ~ j \in J, \text{ a.e. } t \in [0, T] \right\}.
\end{align*}

Next lemma presents an alternative formulation of the KKT conditions using the set-valued map $\mathcal{M}$.
\begin{lemma} \label{Lem_M_KKT}
Assume that (H1) and (H2) are satisfied at $\bar{x} \in \Omega$. Then, $\bar{x}$ is a KKT solution of (CTP) if and only if
\begin{align*}
-\nabla_{x}\phi(\bar{x}(\cdot), \cdot) \in \mathcal{M}(\bar{x}, 0).
\end{align*}
\end{lemma}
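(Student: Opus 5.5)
The proof is a direct unpacking of the definitions. I would show each implication separately, using only the definition of $\mathcal{M}$ and the fact that $\bar{x}\in\Omega$. Two preliminary checks are needed. First, $(\bar{x},0)\in E\times F$, so that $\mathcal{M}(\bar{x},0)$ is defined: $\bar{x}(t)\in B_r(t)$ trivially, and $0\in F$. Second, $-\nabla_x\phi(\bar{x}(\cdot),\cdot)\in L^1([0,T];\mathbb{R}^n)$. This holds because (H1) gives $\Vert\nabla_x\phi(\bar{x}(t),t)\Vert\le\hat{c}_\phi(t)$ with $\hat{c}_\phi$ integrable, and measurability follows since $\nabla_x\phi(\bar x(t),t)$ is a pointwise limit of difference quotients of the Carathéodory function $\phi$.

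($\Rightarrow$) Let $u,v$ be KKT multipliers. Set $\psi:=-\nabla_x\phi(\bar{x}(\cdot),\cdot)$. Condition \eqref{KKT1} states exactly that
\[
\psi(t)=\sum_{i=1}^p u_i(t)\nabla_x h_i(\bar{x}(t),t)+\sum_{j=1}^m v_j(t)\nabla_x g_j(\bar{x}(t),t) \quad \text{a.e.},
\]
and \eqref{KKT3} gives $v_j\ge0$. It remains to check $v_j(t)g_j^-(\bar{x}(t),t)=0$. Feasibility gives $g_j(\bar{x}(t),t)\le0$, hence $g_j^-(\bar{x}(t),t)=-g_j(\bar{x}(t),t)$. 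Then $v_jg_j^-=-v_jg_j=0$ by \eqref{KKT2}. So $\psi\in\mathcal{M}(\bar{x},0)$.

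($\Leftarrow$) Suppose $-\nabla_x\phi(\bar{x}(\cdot),\cdot)\in\mathcal{M}(\bar{x},0)$. Then there exist $(u,v)\in L^\infty$ with $v\ge0$ and $v_jg_j^-(\bar{x}(t),t)=0$ a.e., and the representation above holds. Rearranging the representation gives \eqref{KKT1}, and \eqref{KKT3} is immediate. Again using $g_j(\bar{x}(t),t)\le0$ a.e. (feasibility), we get $v_jg_j=-v_jg_j^-=0$, which is \eqref{KKT2}. Hence $\bar{x}$ is a KKT solution.

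There is no real obstacle. The only points needing care are the integrability and measurability of $\nabla_x\phi(\bar x(\cdot),\cdot)$ via (H1), and the use of feasibility to identify $g_j^-$ with $-g_j$ along $\bar{x}$.
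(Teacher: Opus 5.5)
Your proof is correct and matches the paper's argument essentially step for step: both directions come down to unpacking the definition of $\mathcal{M}(\bar{x},0)$ and using feasibility to write $g_j^-(\bar{x}(t),t) = -g_j(\bar{x}(t),t)$. You also check that $(\bar{x},0)\in E\times F$ and that $\nabla_x\phi(\bar{x}(\cdot),\cdot)\in L^1([0,T];\mathbb{R}^n)$ by (H1), and you take the multipliers directly from the definition of a KKT solution rather than invoking Theorem~\ref{KKT}; these are small gains in rigor over the paper's write-up.
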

\begin{proof}
Assume that $\bar{x}$ is a KKT solution of (CTP). By Theorem \ref{KKT}, there exists $(u,v) \in L^{\infty}([0,T];\mathbb{R}^{p} \times \mathbb{R}^{m})$ such that, for almost every $t \in [0,T]$,
\begin{align}
& \nabla_{x}\phi(\bar{x}(t), t) + \sum_{i=1}^{p}u_{i}(t)\nabla_{x}h_{i}(\bar{x}(t), t) + \sum_{j=1}^{m}v_{j}(t)\nabla_{x}g_{j}(\bar{x}(t), t) = 0, \label{Eq_1} \\
& v_{j}(t)g_{j}(\bar{x}(t), t) = 0, ~ j \in J, \label{Eq_2} \\
& v_{j}(t) \geq 0, ~ j \in J. \label{Eq_3}
\end{align}
By using the feasibility of $\bar{x}$ and \eqref{Eq_2}, we have, for almost every $t \in [0, T]$,
\begin{align*}
v_{j}(t) g_{j}^{-}(\bar{x}(t), t) = v_{j}(t) \max\{-g_{j}(\bar{x}(t),t), 0\} = - v_{j}(t) g_{j}(\bar{x}(t), t) = 0, ~ j \in J.
\end{align*}
From \eqref{Eq_1} and \eqref{Eq_3}, it follows that
\begin{align*}
-\nabla_{x}\phi(\bar{x}(\cdot), \cdot) \in \mathcal{M}(\bar{x}, 0).
\end{align*}

Conversely, assume that $-\nabla_{x}\phi(\bar{x}(\cdot), \cdot) \in \mathcal{M}(\bar{x},0)$. From the definition of $\mathcal{M}(\bar{x},0)$, we know that there exists $(u, v) \in L^{\infty}([0, T]; \mathbb{R}^{p}\times \mathbb{R}^{m})$ such that, for almost every $t \in [0, T]$,
\begin{align}
& -\nabla_{x}\phi(\bar{x}(t), t) = \sum_{i=1}^{p}u_{i}(t)\nabla_{x}h_{i}(\bar{x}(t), t) + \sum_{j=1}^{m}v_{j}(t)\nabla_{x}g_{j}(\bar{x}(t), t), \label{Eq_4} \\
& v_{j}(t)g_{j}^{-}(\bar{x}(t), t) = 0, ~ j \in J, \label{Eq_5} \\
& v_{j}(t) \geq 0, ~ j \in J. \label{Eq_6}
\end{align}
Using the feasibility of $\bar{x}$ and \eqref{Eq_5}, we obtain, for almost every $t \in [0, T]$,
\begin{align} \label{eqtn}
-v_{j}(t) g_{j}(\bar{x}(t), t) = v_{j}(t) \max\{-g_{j}(\bar{x}(t), t), 0\} = v_{j}(t) g_{j}^{-}(\bar{x}(t), t) = 0, ~ j \in J.
\end{align}
It follows from \eqref{Eq_4}, \eqref{Eq_6} and \eqref{eqtn} that $\bar{x}$ is a KKT solution of (CTP).
\end{proof}

The Lemma \ref{Lema_pw_AKKT}, stated and proved below, ensures that the pw-AKKT conditions can be represented in an alternative form involving the weak Painlevé–Kuratowski outer (or upper) limit of the set-valued map $\mathcal{M}: E \times F \rightsquigarrow L^{1}([0, T]; \mathbb{R}^{n})$ given by
\begin{align*}
    \wl\limsup_{(x, \theta) \to (\bar{x}, 0)}\mathcal{M}(x, \theta) &:= \left\{ \psi \in L^{1}([0, T]; \mathbb{R}^{n}) ~:~  \exists ~ \{(x^{k}, \theta^{k})\} \subset E \times F ~ \text{with} \right.\\
    &\qquad (x^{k}(t), \theta^{k}(t)) \to (\bar{x}(t), 0) \text{ a.e. } t \in [0, T], \\
    &\qquad \left.\psi^{k} \rightharpoonup \psi \text{ in } L^{1}([0, T]; \mathbb{R}^{n}), ~ \psi^{k} \in \mathcal{M}(x^{k}, \theta^{k}) ~ \forall \, k \in \mathbb{N} \right\}.
\end{align*}

\begin{lemma} \label{Lema_pw_AKKT}
Assume that (H1) and (H2) are satisfied at $\bar{x} \in \Omega$. Then, $\bar{x}$ is a pw-AKKT solution of (CTP) if and only if
\begin{align*}
    -\nabla_{x}\phi(\bar{x}(\cdot), \cdot) \in \wl\limsup_{(x, \theta) \to (\bar{x}, 0)}\mathcal{M}(x, \theta).
\end{align*}
\end{lemma}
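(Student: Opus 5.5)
Both implications come from unpacking the definitions. In each direction the link between the data is
\[
\psi^k(\cdot)=\sum_{i=1}^{p}u_i^k(\cdot)\nabla_x h_i(x^k(\cdot),\cdot)+\sum_{j=1}^{m}v_j^k(\cdot)\nabla_x g_j(x^k(\cdot),\cdot),
\qquad \theta_j^k(\cdot)=v_j^k(\cdot)g_j^-(x^k(\cdot),\cdot),
\]
so that $\nabla_x L(x^k,u^k,v^k,\cdot)=\nabla_x\phi(x^k(\cdot),\cdot)+\psi^k$. The one analytic fact needed is that
\[
\nabla_x\phi(x^k(\cdot),\cdot)\to\nabla_x\phi(\bar x(\cdot),\cdot)\quad\text{strongly in } L^1([0,T];\mathbb{R}^n)
\]
whenever $x^k\in E$ and $x^k(t)\to\bar x(t)$ a.e. This follows from dominated convergence. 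Continuity of $\nabla_x\phi(\cdot,t)$ (H1) gives pointwise a.e. convergence. The bound $\|\nabla_x\phi(x,t)\|\le \hat c_\phi(t)$ on $\bar B_r(t)$, with $x^k(t)\in B_r(t)\subset \bar B_r(t)$, gives the integrable dominating function $2\hat c_\phi$. Combining this with Remark \ref{remark}, weak $L^1$ convergence of $\nabla_x L(x^k,u^k,v^k,\cdot)$ to $0$ is equivalent to $\psi^k\rightharpoonup -\nabla_x\phi(\bar x(\cdot),\cdot)$ in $L^1$, since a strongly convergent sequence may be added without affecting weak limits.

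($\Rightarrow$) Let $\{(x^k,u^k,v^k)\}$ be the AKKT sequence from Definition \ref{Def_pw_AKKT}, and define $\theta^k$ and $\psi^k$ as above. We verify the requirements of the outer limit in turn.
\begin{itemize}
\item $x^k\in E$ by the additional condition in Definition \ref{Def_pw_AKKT}.
\item $\theta^k\ge 0$ because $v^k\ge0$ and $g^-_j\ge0$.
\item $\theta^k\in L^1$: it is measurable, and it is bounded on $[0,T]$ because $v^k\in L^\infty$ and $g_j^-(x^k(t),t)\le k_{h,g}(t)$ by (A2), so it is integrable. Hence $\theta^k\in F$.
\item $\theta^k(t)\to0$ a.e. by the second AKKT condition, and $x^k(t)\to\bar x(t)$ a.e. by assumption.
\item $\psi^k\in L^1$ by the bound $\hat c_{h,g}$ and $(u^k,v^k)\in L^\infty$, so $\psi^k\in\mathcal M(x^k,\theta^k)$ by construction.
\end{itemize}
The equivalence from the first paragraph gives $\psi^k\rightharpoonup-\nabla_x\phi(\bar x(\cdot),\cdot)$, which is the required membership in the outer limit.

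($\Leftarrow$) Conversely, take $(x^k,\theta^k)\in E\times F$ and $\psi^k\in\mathcal M(x^k,\theta^k)$ as in the definition of the outer limit, and let $(u^k,v^k)\in L^\infty$ be the multipliers representing $\psi^k$. Then:
\begin{itemize}
\item $v^k\ge0$ directly from the definition of $\mathcal M$.
\item $v_j^k g_j^-(x^k,\cdot)=\theta_j^k\to0$ a.e.
\item The gradient condition holds by the equivalence from the first paragraph together with Remark \ref{remark}.
\item $x^k(t)\to\bar x(t)$ a.e. and $x^k\in E$ (that is, $x^k(t)\in B_r(t)$).
\end{itemize}
Hence $\bar x$ is a pw-AKKT solution.

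The main obstacle is the strong $L^1$ convergence of $\nabla_x\phi(x^k(\cdot),\cdot)$ and, behind it, measurability of $t\mapsto\nabla_x\phi(x^k(t),t)$. Measurability holds because $\phi$ is Carathéodory, so its partial derivatives are limits of measurable difference quotients. This step is also exactly where the condition $x^k\in E$ added to Definition \ref{Def_pw_AKKT} is indispensable: it provides the dominating function.
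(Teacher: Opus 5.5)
Your proof is correct and follows the paper's argument. In both you unpack the two definitions through $\psi^k=\nabla_x L(x^k,u^k,v^k,\cdot)-\nabla_x\phi(x^k(\cdot),\cdot)$ (the paper's $\varepsilon^k-\nabla_x\phi(x^k(\cdot),\cdot)$), then use dominated convergence, with the bound $\hat c_\phi$ on $\bar B_r(t)$ that $x^k\in E$ makes available, to pass $\nabla_x\phi(x^k(\cdot),\cdot)\to\nabla_x\phi(\bar x(\cdot),\cdot)$. You are somewhat more careful than the paper: you use strong rather than weak $L^1$ convergence, check that $\theta^k$ and $\psi^k$ are integrable, and verify the weak convergence explicitly in the converse direction, which the paper leaves implicit. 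One small wording fix: $\theta_j^k$ need not be bounded, but it is dominated by the integrable function $\|v_j^k\|_\infty k_{h,g}$, which is all you need.
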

\begin{proof}
Assume that $\bar{x}$ is a pw-AKKT solution of (CTP). By Definitions \ref{Def_AKKT} and \ref{Def_pw_AKKT} and Remark \ref{remark}, there exist sequences $\{(x^{k}, u^{k}, v^{k})\} \subset L^{\infty}([0, T]; \mathbb{R}^{n}\times \mathbb{R}^{p}\times \mathbb{R}^{m})$ and $\{(\varepsilon^{k}, \theta^{k})\} \subset L^{1}([0, T]; \mathbb{R}^{n})\times L^{1}([0, T]; \mathbb{R}^{m})$ such that, for each $k \in \mathbb{N}$ and almost every $t \in [0, T]$,
\begin{align}
    & \nabla_{x}\phi(x^{k}(t), t) + \sum_{i=1}^{p}u_{i}^{k}(t)\nabla_{x}h_{i}(x^{k}(t), t) + \sum_{j=1}^{m}v_{j}^{k}(t)\nabla_{x}g_{j}(x^{k}(t), t) = \varepsilon^{k}(t), 
    \label{Eq_7} \\
    & v_{j}^{k}(t)g_{j}^{-}(x^{k}(t), t) = \theta_{j}^{k}(t), ~ j \in J, 
    \label{Eq_8} \\
    & v_{j}^{k}(t) \geq 0, ~ j \in J, 
    \label{Eq_9}\\
    &x^{k}(t) \in B_{r}(t), 
    \label{Eq_Extra_x_1}
    \end{align}
where $\varepsilon^{k} \rightharpoonup 0$ in $L^{1}([0, T]; \mathbb{R}^{n})$ and $(x^{k}(t), \theta^{k}(t)) \to (\bar{x}(t), 0)$ for almost every $t \in [0, T]$. By \eqref{Eq_8}, \eqref{Eq_9} and \eqref{Eq_Extra_x_1}, we conclude that $(x^{k}, \theta^{k}) \in E \times F$ for each $k \in \mathbb{N}$. Clearly, $\left( \varepsilon^{k} - \nabla_x \phi(x^{k}(\cdot),\cdot) \right) \in \mathcal{M}(x^{k},\theta^{k})$ for all $k \in \mathbb{N}$. Moreover, for all $\gamma \in L^{\infty}([0, T]; \mathbb{R}^{n})$, we have
\begin{align*}
\int_{0}^{T}\left[\varepsilon^{k}(t) - \nabla_{x}\phi(x^{k}(t), t)\right]\cdot \gamma(t)\dif t = \int_{0}^{T}\varepsilon^{k}(t) \cdot \gamma(t)\dif t + \int_{0}^{T}\left[- \nabla_{x}\phi(x^{k}(t), t)\cdot \gamma(t) \right] \dif t
\end{align*}
for each $k \in \mathbb{N}$. Taking the limit as $k \to \infty$, applying the Dominated Convergence Theorem \cite[Theorem 3.25]{Gordon1994} and the continuity of $\phi(\cdot, t)$, it follows that
\begin{align*}
\int_{0}^{T}\left[\varepsilon^{k}(t) - \nabla_{x}\phi(x^{k}(t), t)\right]\cdot \gamma(t)\dif t \to \int_{0}^{T}\left[- \nabla_{x}\phi(\bar{x}(t), t)\cdot \gamma(t) \right] \dif t
\end{align*}
for all $\gamma \in L^{\infty}([0, T]; \mathbb{R}^{n})$, i.e., $\left(\varepsilon^{k} - \nabla_{x}\phi(x^{k}(\cdot), \cdot)\right) \rightharpoonup - \nabla_{x}\phi(\bar{x}(\cdot), \cdot)$ in $L^{1}([0, T]; \mathbb{R}^{n})$. Therefore,
\begin{align*}
-\nabla_{x}\phi(\bar{x}(\cdot), \cdot) \in \wl\limsup_{(x, \theta) \to (\bar{x}, 0)}\mathcal{M}(x, \theta).
\end{align*}

On the other hand, assume that the inclusion above is true. By definition, there exist sequences $\{(x^{k}, \theta^{k})\} \subset E \times F$ and $\{\psi^{k}\} \subset L^{1}([0, T]; \mathbb{R}^{n})$ such that $x^{k}(t) \in B_{r}(t)$ for all $k \in \mathbb{N}$ and $(x^{k}(t), \theta^{k}(t)) \to (\bar{x}(t), 0)$ for almost every $t \in [0, T]$, $\psi^{k} \rightharpoonup - \nabla_{x}\phi(\bar{x}(\cdot), \cdot)$ in $L^{1}([0, T]; \mathbb{R}^{n})$, and $\psi^{k} \in \mathcal{M}(x^{k}, \theta^{k})$ for all $k \in \mathbb{N}$. From the definition of $\mathcal{M}(x^{k}, \theta^{k})$, there exists a sequence $\{(u^{k}, v^{k})\} \subset L^{\infty}([0, T]; \mathbb{R}^{p}\times \mathbb{R}^{m})$ such that, for every $k \in \mathbb{N}$ and almost every $t \in [0, T]$,
\begin{align*}
    & \psi^{k}(t) =\sum_{i=1}^{p}u_{i}^{k}(t)\nabla_{x}h_{i}(x^{k}(t), t) + \sum_{j=1}^{m}v_{j}^{k}(t)\nabla_{x}g_{j}(x^{k}(t), t), \\
    & v_{j}^{k}(t)g_{j}^{-}(x^{k}(t), t) = \theta_{j}^{k}(t), ~ j \in J, \\
    & v_{j}^{k}(t) \geq 0, ~ j \in J.
\end{align*}
Defining $\varepsilon^{k}(t) := \psi^{k}(t) + \nabla_{x}\phi(x^{k}(t), t)$ for all $k \in \mathbb{N}$ and almost every $t \in [0, T]$, it follows that $\bar{x}$ is a pw-AKKT solution of (CTP).
\end{proof}

Inspired by Lemmas \ref{Lem_M_KKT} and \ref{Lema_pw_AKKT}, we define the AKKT-regularity for Problem (CTP) as follows.
\begin{definition} \label{Def_pw_AKKT_regul}
The \emph{asymptotic KKT regularity} (or \emph{AKKT-regularity}) is said to be satisfied at  $\bar{x} \in \Omega$ if
\begin{align*}
\wl\limsup_{(x, \theta) \to (\bar{x}, 0)}\mathcal{M}(x, \theta) \subset \mathcal{M}(\bar{x}, 0).
\end{align*}
\end{definition}

The theorem \ref{Theo_1}, presented below, states that the AKKT-regularity condition serves as a constraint qualification for (CTP), thereby ensuring the validity of the KKT conditions.
\begin{theorem} \label{Theo_1}
Let $\bar{x} \in \Omega$. Assume that (H1) and (H2) are valid and that the AKKT-regularity is satisfied. If $\bar{x}$ is a local optimal solution of (CTP), then it is a KKT solution.
\end{theorem}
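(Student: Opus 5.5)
The argument chains the three results already at hand: Theorem \ref{AKKT_opt_cond}, then Lemma \ref{Lema_pw_AKKT}, then AKKT-regularity, and finally Lemma \ref{Lem_M_KKT}. No new analysis should be needed, since the two lemmas have already translated both pw-AKKT and KKT into statements about the set-valued map $\mathcal{M}$.

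First, since $\bar{x}$ is a local optimal solution of (CTP) and (H1)--(H2) hold, Theorem \ref{AKKT_opt_cond} gives that $\bar{x}$ is a pw-AKKT solution. No constraint qualification is needed at this step. The AKKT sequence $\{(x^k,u^k,v^k)\}$ it provides satisfies $x^k(t)\in B_r(t)$ for almost every $t$, as recorded in the remark following that theorem. This matters because the outer limit is only taken over sequences in $E\times F$.

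Second, I apply the forward direction of Lemma \ref{Lema_pw_AKKT} to obtain
\begin{align*}
-\nabla_x\phi(\bar{x}(\cdot),\cdot)\in \wl\limsup_{(x,\theta)\to(\bar{x},0)}\mathcal{M}(x,\theta).
\end{align*}
Third, AKKT-regularity (Definition \ref{Def_pw_AKKT_regul}) states that this weak outer limit is contained in $\mathcal{M}(\bar{x},0)$. Hence $-\nabla_x\phi(\bar{x}(\cdot),\cdot)\in\mathcal{M}(\bar{x},0)$. Finally, the converse direction of Lemma \ref{Lem_M_KKT} produces multipliers $(u,v)\in L^\infty([0,T];\mathbb{R}^p\times\mathbb{R}^m)$ satisfying \eqref{KKT1}--\eqref{KKT3}. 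Therefore $\bar{x}$ is a KKT solution.

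There is no real obstacle. The only point to check is the compatibility of the radius $r$ in the definition of $E$ with the one used in Theorem \ref{AKKT_opt_cond}. I would handle this by taking $r$ as the common radius from (H1)--(H2) and, if necessary, shrinking it to the local-optimality radius. By the remark after Theorem \ref{AKKT_opt_cond}, the AKKT sequence still stays in $B_r(t)$ after such shrinking.
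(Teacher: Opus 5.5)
Your proposal is correct and follows the paper's proof exactly. The paper chains Theorem \ref{AKKT_opt_cond} (pw-AKKT), the forward direction of Lemma \ref{Lema_pw_AKKT} (membership of $-\nabla_x\phi(\bar{x}(\cdot),\cdot)$ in the weak outer limit), the AKKT-regularity inclusion into $\mathcal{M}(\bar{x},0)$, and the converse of Lemma \ref{Lem_M_KKT}. Your remark on the radius is a harmless addition the paper leaves implicit: shrinking $r$ only shrinks $E$, hence the weak outer limit, while $\mathcal{M}(\bar{x},0)$ is unchanged, so AKKT-regularity is preserved.
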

\begin{proof}
It follows directly from Theorem \ref{AKKT_opt_cond} and Lemmas \ref{Lem_M_KKT} and \ref{Lema_pw_AKKT}.
\end{proof}

The following example was proposed by \citeauthor{doMonte2026} in \cite[Example 1]{doMonte2026}. It demonstrates a scenario in which the AKKT-regularity condition is violated, yet no KKT solution is present.
\begin{example}
\label{Example_2}
We consider the continuous-time problem below:
\begin{align*}
    \begin{array}{ll}
    \text{minimize} & P(x) = \displaystyle{\int_0^1 \left( t - \frac{1}{2} \right) x_1(t) \dif t} \\
    \text{subject to} & - \left( t - \frac{1}{2} \right) [x_1(t)]^3 + x_2(t) \leq 0 ~ \text{a.e.} ~ t \in [0,1], \\
    & - x_2(t) \leq 0 ~ \text{a.e.} ~ t \in [0,1], \\
    & x \in L^\infty([0,1];\mathbb{R}^2).
    \end{array}
\end{align*}
Observe that $\bar{x}(t) := (0,0)$ a.e. $t \in [0,1]$ is an optimal solution. As demonstrated in \cite[Example 1]{doMonte2026}, the KKT conditions never hold. Since hypotheses (H1) and (H2) are clearly satisfied, it follows from Theorem \ref{Theo_1} that the AKKT-regularity condition does not hold.

The pw-AKKT conditions are satisfied by taking the following sequences
\begin{align*}
    & x_1^k(t) = \left( t - \frac{1}{2} \right) \frac{1}{k}, ~ x_2^k(t) = 0, ~ v_1^k(t) = v_2^k(t) = \frac{k^2}{3 \left( t - \frac{1}{2} \right)^2} ~ \text{a.e.} ~ t \in [0,1] ~ \forall k \in \mathbb{N}.
\end{align*}
\end{example}

The following observation is of particular significance for the subsequent results.

\begin{remark} \label{remark_limit}
Note that, given $\psi \in \mathrm{w}\text{-}\!\limsup_{(x, \theta) \to (\bar{x}, 0)}\mathcal{M}(x, \theta)$, it follows, from the definition of the weak limit, that there exist sequences $\{(x^{k}, \theta^{k})\} \subset E \times F$ and $\{\psi^{k}\} \subset L^{1}([0, T]; \mathbb{R}^{n})$ such that $x^{k}(t) \in B_{r}(t)$ for all $k \in \mathbb{N}$ and $(x^{k}(t), \theta^{k}(t)) \to (\bar{x}(t), 0)$ for almost every $t \in [0, T]$, $\psi^{k} \rightharpoonup \psi$ in $L^{1}([0, T]; \mathbb{R}^{n})$, and $\psi^{k} \in \mathcal{M}(x^{k}, \theta^{k})$ for all $k \in \mathbb{N}$. By the definition of $\mathcal{M}(x^{k}, \theta^{k})$, there exists a sequence $\{(u^{k}, v^{k})\} \subset L^{\infty}([0, T]; \mathbb{R}^{p}\times \mathbb{R}^{m})$ such that,  for every $k \in \mathbb{N}$ and almost every $t \in [0, T]$,
    \begin{align}
        &\psi^{k}(t) =\sum_{i=1}^{p}u_{i}^{k}(t)\nabla_{x}h_{i}(x^{k}(t), t) + \sum_{j=1}^{m}v_{j}^{k}(t)\nabla_{x}g_{j}(x^{k}(t), t),
    \label{Eq_14}\\
        &v_{j}^{k}(t)g_{j}^{-}(x^{k}(t), t) = \theta_{j}^{k}(t), ~ j \in J,
    \label{Eq_15}\\
        &v_{j}^{k}(t) \geq 0, ~ j \in J.
    \label{Eq_16}
    \end{align}
\end{remark}

The next theorem shows that the AKKT-regularity condition is the weakest constraint qualification ensuring that the pw-AKKT conditions implies the KKT conditions for (CTP).
\begin{theorem} \label{Theo_2}
Let $\bar{x}$ be a feasible solution for (CTP). Suppose that (H1) and (H2) are satisfied. Then, the following statements are valid:
\begin{itemize}
\item[(a)] 
Assume that AKKT-regularity is satisfied. If $\bar{x}$ is a pw-AKKT solution, then it is a KKT solution.
\item[(b)] If for every $\phi : \mathbb{R}^n \times [0,T] \rightarrow \mathbb{R}$ satisfying (H1), it holds that ``$\bar{x}$ is a pw-AKKT solution $\Rightarrow$ $\bar{x}$ is a KKT solution'', then AKKT-regularity is satisfied at $\bar{x}$.
\end{itemize}
\end{theorem}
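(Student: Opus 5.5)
Part (a) follows directly from the two characterizations already established. If $\bar{x}$ is a pw-AKKT solution, Lemma \ref{Lema_pw_AKKT} gives
\[
-\nabla_x\phi(\bar{x}(\cdot),\cdot) \in \wl\limsup_{(x,\theta)\to(\bar{x},0)}\mathcal{M}(x,\theta).
\]
AKKT-regularity (Definition \ref{Def_pw_AKKT_regul}) then places $-\nabla_x\phi(\bar{x}(\cdot),\cdot)$ in $\mathcal{M}(\bar{x},0)$. Lemma \ref{Lem_M_KKT} converts this into the KKT conditions. Part (a) is only a chain of these three inclusions.

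For part (b), I take an arbitrary $\psi \in \wl\limsup_{(x,\theta)\to(\bar{x},0)}\mathcal{M}(x,\theta)$ and show $\psi \in \mathcal{M}(\bar{x},0)$. The idea is to build an objective whose negative gradient at $\bar{x}$ is exactly $\psi$. I define the linear functional
\[
\phi(x,t) := -\psi(t)\cdot x ,
\]
so that $\nabla_x\phi(x,t) = -\psi(t)$ for every $x$. This $\phi$ satisfies (H1):
\begin{itemize}
\item $\phi(\cdot,t)$ is $C^1$, and $\phi(x,\cdot)$ is measurable.
\item $|\phi(\bar{x}(t),t)| \le \|\psi(t)\|\,\|\bar{x}\|_\infty$, which is integrable because $\psi \in L^1$ and $\bar{x} \in L^\infty$.
\item The Lipschitz constant and gradient bound are both given by $c_\phi = \hat{c}_\phi = \|\psi(\cdot)\|$, which is integrable.
\end{itemize}
The constraints $h,g$ are unchanged, so (H2) still holds.

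With this $\phi$, $-\nabla_x\phi(\bar{x}(\cdot),\cdot) = \psi$ lies in the weak outer limit, so by Lemma \ref{Lema_pw_AKKT}, $\bar{x}$ is a pw-AKKT solution for the modified problem. Lemma \ref{Lema_pw_AKKT} applies here because $\mathcal{M}$, $E$ and $F$ depend only on $h$, $g$, $\bar{x}$ and $r$, and not on $\phi$. This is the one point to check with care: the sets and the limsup must be the same objects for every choice of $\phi$. That holds, provided the radius $r$ from (H2) is used in $E$ and the new $\phi$ satisfies (H1) with that same $r$, which the global bounds above guarantee. By the hypothesis of (b), $\bar{x}$ is then a KKT solution, so Lemma \ref{Lem_M_KKT} gives $\psi = -\nabla_x\phi(\bar{x}(\cdot),\cdot) \in \mathcal{M}(\bar{x},0)$. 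Since $\psi$ was arbitrary, the inclusion defining AKKT-regularity holds.

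The only real obstacle is therefore routine: confirming that the linear objective meets every item of (H1), in particular the integrability, which relies on $\psi \in L^1$.
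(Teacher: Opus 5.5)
Your proposal is correct and follows the paper's proof: part (a) is the same chain through Lemma \ref{Lema_pw_AKKT}, AKKT-regularity and Lemma \ref{Lem_M_KKT}, and part (b) uses the same linear objective $\phi(x,t) = -\psi(t)\cdot x$. The only difference is cosmetic: the paper verifies the pw-AKKT property directly by setting $\varepsilon^k = \psi^k - \psi \rightharpoonup 0$, rather than invoking the converse half of Lemma \ref{Lema_pw_AKKT}; your explicit check that this $\phi$ satisfies (H1) with the same radius $r$ fills in a detail the paper leaves implicit.
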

\begin{proof}
Part (a) follows directly from Lemmas \ref{Lem_M_KKT} and \ref{Lema_pw_AKKT}.

Let us prove (b). Let $\psi \in \mathrm{w}\text{-}\!\limsup_{(x, \theta) \to (\bar{x}, 0)}\mathcal{M}(x, \theta)$. 
Consider the function $\phi : \mathbb{R}^n \times [0,T] \rightarrow \mathbb{R}$ given as $\phi(x,t) := - \psi(t) \cdot x$. Define $\varepsilon^{k}(t) := \nabla_{x}\phi(x^{k}(t), t) + \psi^{k}(t)$ for all $k \in \mathbb{N}$ and almost every $t \in [0, T]$, where the sequences $\{x^{k}\}$ and $\{\psi^{k}\}$ are those referred to in Remark \ref{remark_limit}. We have that
\begin{align*} 
\varepsilon^{k}(t) = \nabla_{x}\phi(x^{k}(t), t) + \psi^{k}(t) = -\psi(t) + \psi^{k}(t) ~ \text{a.e.} ~ t \in [0,T] ~ \forall \, k \in \mathbb{N}. 
\end{align*}
Consequently, from Remark \ref{remark_limit}, $\varepsilon^{k} \rightharpoonup 0$. Furthermore, note that
\begin{align*}
& \varepsilon^{k}(t) - \nabla_{x}L(x^{k}(t), u^{k}(t), v^{k}(t), t) \\ 
& \qquad = \psi^{k}(t) - \sum_{i=1}^{p}u_{i}^{k}(t)\nabla_{x}h_{i}(x^{k}(t), t) - \sum_{j=1}^{m}v_{j}^{k}(t)\nabla_{x}g_{j}(x^{k}(t), t) \\
& \qquad = 0  ~ \text{a.e.} ~ t \in [0,T] ~ \forall \, k \in \mathbb{N},
\end{align*}
where we used \eqref{Eq_14} in the last equality. Therefore, $\bar{x}$ is a pw-AKKT solution of (CTP), so that, by hypothesis, $\bar{x}$ is a KKT solution. By Lemma \ref{Lem_M_KKT}, $-\nabla_{x}\phi(\bar{x}(\cdot), \cdot) \in \mathcal{M}(\bar{x}, 0)$, that is, $\psi \in \mathcal{M}(\bar{x}, 0)$. We have just proved that
$$
\wl\limsup_{(x, \theta) \to (\bar{x}, 0)}\mathcal{M}(x, \theta) \subset \mathcal{M}(\bar{x}, 0),
$$
meaning that AKKT-regularity is satisfied at $\bar{x}$.
\end{proof}

\begin{remark} \label{ObsLagAum}
It is imperative to highlight that in \citeauthor{doMonte2026} \cite{doMonte2026}, the authors put forth a variant of the Augmented Lagrange Method that, under specific assumptions, possesses the capacity to generate pw-AKKT sequences. For a more thorough exposition of these results, please refer to \cite[Theorems 3 and 4]{doMonte2026}. Consequently, if AKKT-regularity is satisfied, then the Augmented Lagrange Method is capable of generating sequences that satisfy the KKT conditions. Furthermore, AKKT-regularity is the weakest property under which this result is valid.
\end{remark}

\section{Sufficient criteria for AKKT-regularity} \label{SuffCrit}

In this section, we propose some sufficient criteria for AKKT-regularity. These criteria involve imposing bounds on the Lagrange multipliers, a full rank assumption on the Jacobian matrix of the constraint maps, and the metric regularity of a certain set-valued map defined in terms of the constraints. 

The initial sufficiency criterion is derived by imposing bounds on a given sequence of Lagrange multipliers.

\begin{theorem} \label{Teo_Sufi}
Let $\bar{x} \in \Omega$ . Suppose that (H1) and (H2) are satisfied. Let
\begin{align*}
\psi \in \wl\limsup_{(x, \theta) \to (\bar{x}, 0)}\mathcal{M}(x, \theta).
\end{align*}
Assume that there exist $k_{u} > 0$ and $k_{v} > 0$ such that
\begin{align*}
\Vert u^{k}(t) \Vert \leq k_{u} \quad \text{and} \quad \Vert v^{k}(t) \Vert \leq k_{v} ~ \text{a.e.} ~ t \in [0,T] ~ \forall \, k \in \mathbb{N},
\end{align*}
where the sequence $\{(u^{k},v^{k})\} \in L^\infty([0,T];\mathbb{R}^{p} \times \mathbb{R}^m)$ comes from the definition of the weak-limit above (see Remark \ref{remark_limit}). Then, AKKT-regularity is satisfied at $\bar{x}$.
\end{theorem}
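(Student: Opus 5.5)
We show that the given $\psi$ lies in $\mathcal{M}(\bar{x},0)$. Strictly this yields AKKT-regularity only when the multiplier bound holds for every element of the weak upper limit, which is how the statement should be read. The plan is to extract a weak-$*$ limit of the bounded multipliers and pass to the limit in \eqref{Eq_14}--\eqref{Eq_16}.

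First, take the sequences $\{(x^k,\theta^k)\}$, $\{\psi^k\}$, $\{(u^k,v^k)\}$ from Remark \ref{remark_limit}. By hypothesis $\{(u^k,v^k)\}$ is bounded in $L^\infty([0,T];\mathbb{R}^p\times\mathbb{R}^m)=(L^1)^*$. Since $L^1$ is separable, Banach--Alaoglu gives a subsequence (not relabeled) with $(u^k,v^k)\rightharpoonup^{\ast}(u,v)$ in $L^\infty$. The set $\{v\ge 0 \text{ a.e.}\}$ is weak-$*$ closed, since it is tested against nonnegative $L^1$ functions. Hence $v\ge 0$ a.e., which gives \eqref{Eq_16} in the limit.

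Second, we pass to the limit in \eqref{Eq_14}. Fix $\gamma\in L^\infty([0,T];\mathbb{R}^n)$ and write
\[
\int_0^T u_i^k\,\nabla_x h_i(x^k,t)\cdot\gamma\,\dif t=\int_0^T u_i^k\bigl[\nabla_x h_i(x^k,t)-\nabla_x h_i(\bar x,t)\bigr]\cdot\gamma\,\dif t+\int_0^T u_i^k\,\nabla_x h_i(\bar x,t)\cdot\gamma\,\dif t .
\]
The first integral is bounded by $k_u\|\gamma\|_\infty\int_0^T\|\nabla_x h_i(x^k,t)-\nabla_x h_i(\bar x,t)\|\dif t$. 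This tends to $0$ by dominated convergence: continuity of $\nabla_x h_i(\cdot,t)$ and $x^k(t)\to\bar x(t)$ give pointwise convergence, and since $x^k(t)\in B_r(t)$ the integrand is dominated by $2\hat c_{h,g}$ from (H2). The second integral converges to $\int u_i\nabla_x h_i(\bar x,t)\cdot\gamma$, because $\nabla_x h_i(\bar x(\cdot),\cdot)\cdot\gamma\in L^1$. The $g_j$ terms are handled identically. Since $\psi^k\rightharpoonup\psi$ in $L^1$, uniqueness of weak limits yields
\[
\psi(t)=\sum_{i\in I}u_i(t)\nabla_x h_i(\bar x(t),t)+\sum_{j\in J}v_j(t)\nabla_x g_j(\bar x(t),t)\quad\text{a.e.}
\]

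Third, we obtain complementarity, which I expect to be the main obstacle, since $v^k$ converges only weakly-$*$ while the constraint is a pointwise product. Fix $j$ and a measurable $A\subset[0,T]$. Split $v_j^k g_j^-(x^k,t)=v_j^k g_j^-(\bar x,t)+v_j^k[g_j^-(x^k,t)-g_j^-(\bar x,t)]$. The second part tends to $0$ in $L^1$: it is bounded by $k_v\,|g_j(x^k,t)-g_j(\bar x,t)|$, which converges to $0$ pointwise and is dominated by $2k_{h,g}$ from (A2). The left side $\theta_j^k$ is bounded by $k_v k_{h,g}$ and tends to $0$ a.e., so $\int_A\theta_j^k\to0$ by dominated convergence. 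Since $g_j^-(\bar x,\cdot)\chi_A\in L^1$, weak-$*$ convergence gives $\int_A v_j^k g_j^-(\bar x,t)\dif t\to\int_A v_j g_j^-(\bar x,t)\dif t$. Therefore $\int_A v_j g_j^-(\bar x,t)\dif t=0$ for every measurable $A$, so $v_j g_j^-(\bar x(t),t)=0$ a.e.

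Combining the three steps gives $\psi\in\mathcal{M}(\bar x,0)$, which is the required inclusion of Definition \ref{Def_pw_AKKT_regul}.
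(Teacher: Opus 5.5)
Your proof is correct and takes essentially the same route as the paper. You extract a weak-$*$ convergent subsequence of the bounded multipliers, split each term into an $\bar{x}$-part (handled by weak-$*$ convergence) and a difference part (handled by dominated convergence via (H2)), and pass to the limit in \eqref{Eq_14}--\eqref{Eq_16}; your reading that the multiplier bound must hold for every element of the weak upper limit matches how the paper's proof implicitly uses the hypothesis.
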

\begin{proof}
    Let $\psi \in \mathrm{w}\text{-} \! \limsup_{(x, \theta) \to (\bar{x}, 0)}\mathcal{M}(x, \theta)$. 
    By hypothesis, there exist constants $k_{u}$ and $k_{v}$ such that $\Vert u^{k}(t)\Vert \leq k_{u}$ and $\Vert v^{k}(t)\Vert \leq k_{v}$ for all $k \in \mathbb{N}$ and almost every $t \in [0, T]$. Hence, $\{(u^{k}, v^{k})\}$ is uniformly bounded in $L^{\infty}([0, T]; \mathbb{R}^{p}\times \mathbb{R}^{m})$. Thus, by extracting subsequences (not renamed), we can conclude that $(u^{k}, v^{k}) \rightharpoonup^{\ast} (u, v)$ in $L^{\infty}([0, T]; \mathbb{R}^{p}\times \mathbb{R}^{m})$. 
    
    From \eqref{Eq_15} and \eqref{Eq_16}, for any Lebesgue measurable set $S \subset [0, T]$, for $j \in J$, we have that
    \begin{align}
    \nonumber
        \int_{S}\theta_{j}^{k}(t)\dif t &= \int_{S}v_{j}^{k}(t)g_{j}^{-}(x^{k}(t), t)\dif t \\
	%
	%
	%
        &= \int_{S}v_{j}^{k}(t)\left[g_{j}^{-}(x^{k}(t), t) - g_{j}^{-}(\bar{x}(t), t)\right]\dif t  + \int_{S}v_{j}^{k}(t)g_{j}^{-}(\bar{x}(t), t) \dif t
    \label{Eq_18} 
    \end{align}
    and
    \begin{align}
        \int_{S}v_{j}^{k}(t)\dif t \geq 0.
    \label{Eq_19}
    \end{align}
It follows from the assumptions that $\{v_{j}^{k}(t)g_{j}^{-}(x^{k}(t), t)\}$ and $\{v_{j}^{k}(t)[g_{j}^{-}(x^{k}(t), t) - g_{j}^{-}(\bar{x}(t), t)]\}$ are dominated sequences. Consequently, by \eqref{Eq_15}, $\{\theta_{j}^{k}(t)\}$ is also a dominated sequence. Thus, by taking the limit as $k \to \infty$ on both sides of \eqref{Eq_18} and \eqref{Eq_19}, using the Dominated Convergence Theorem \cite[Theorem 3.25]{Gordon1994} in the integral on the first term of the equality and in the first integral in the second term, the continuity of $g(\cdot, t)$ for almost every $t \in [0, T]$, and the weak limit $v^{k} \rightharpoonup^{\ast} v$, we obtain
    \begin{align}
        v_{j}(t)g_{j}^{-}(\bar{x}(t), t) = 0, ~ v_{j}(t)\geq 0, ~ j \in J, \text{ a.e } t \in [0, T]. 
    \label{Eq_20}
    \end{align}
    Similarly, using \eqref{Eq_14}, we can write, for any Lebesgue measurable set $S \subset [0, T]$,
    \begin{align*}
        \int_{S}\psi^{k}(t)\dif t &= \int_{S}\sum_{i=1}^{p}u_{i}^{k}(t)\left[\nabla_{x}h_{i}(x^{k}(t), t) - \nabla_{x}h_{i}(\bar{x}(t), t)\right]\dif t\\
        &\quad + \int_{S}\sum_{j=1}^{m}v_{j}^{k}(t)\left[\nabla_{x}g_{j}(x^{k}(t), t) - \nabla_{x}g_{j}(\bar{x}(t), t)\right]\dif t\\
        &\quad + \int_{S}\sum_{i=1}^{p}u_{i}^{k}(t) \nabla_{x}h_{i}(\bar{x}(t), t)\dif t  + \int_{S}\sum_{j=1}^{m}v_{j}^{k}(t) \nabla_{x}g_{j}(\bar{x}(t), t)\dif t
    \end{align*}
    for each $k \in \mathbb{N}$, and, by taking limits, obtain
    \begin{align}
        &\psi(t) = \sum_{i=1}^{p}u_{i}(t)\nabla_{x}h_{i}(\bar{x}(t), t) + \sum_{j=1}^{m}v_{j}(t)\nabla_{x}g_{j}(\bar{x}(t), t) ~ \text{a.e.} ~ t \in [0,T]. 
    \label{Eq_21}
    \end{align}

From \eqref{Eq_20} and \eqref{Eq_21} we see that $\psi \in \mathcal{M}(\bar{x}, 0)$. Therefore, AKKT-regularity is satisfied at $\bar{x}$.
\end{proof}

\begin{remark}
Examples \ref{Example_1} and \ref{Example_2} illustrate situations in which the pw-AKKT condition holds, while the considered solution is not related to the optimal solution of the problem and is not a KKT solution, respectively. In Example \ref{Example_1}, the failure of the AKKT-regularity condition can be ensured by Item (a) of Theorem \ref{Theo_2}. In both cases, the sequences of multipliers considered are not uniformly bounded, that is, the boundedness assumption on the multipliers in Theorem \ref{Teo_Sufi} is not satisfied.
\end{remark}

Following, we present a sufficient criterion for AKKT-regularity under a full rank assumption on the Jacobian of the constraints.

\begin{theorem}
Let $\bar{x} \in \Omega$. Assume that (H1) and (H2) are satisfied. Let 
$$
\psi \in \mathrm{w}\text{-} \!\! \limsup_{(x, \theta) \to (\bar{x}, 0)}\mathcal{M}(x, \theta).
$$
Assume that there exists $k_\psi > 0$ such that 
$$
\Vert \psi^{k}(t) \Vert \leq k_\psi ~ \text{a.e.} ~ t \in [0,T] ~ \forall \, k \in \mathbb{N},
$$
where the sequence $\{\psi^{k}\} \subset L^1([0,T];\mathbb{R}^{n})$ comes from the definition of the weak-limit above (see Remark \ref{remark_limit}). In addition, assume that there exists $K > 0$ such that $\det(\bar{\Upsilon}(t)\bar{\Upsilon}(t)^{\top}) \geq K$ for almost every $t \in [0,T]$, where
\begin{align*}
\bar{\Upsilon}(t) := 
\left[
\begin{array}{c}
\nabla_x h(\bar{x}(t),t) \\ \nabla_x g(\bar{x}(t),t)
\end{array}
\right] ~ \text{a.e.} ~ t \in [0,T].
\end{align*}
Then, AKKT-regularity is satisfied at $\bar{x}$.
\end{theorem}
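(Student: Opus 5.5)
The plan is to reduce to Theorem \ref{Teo_Sufi}: we show that the full rank condition, together with the uniform bound on $\psi^k$, forces the multiplier sequences $\{(u^k,v^k)\}$ of Remark \ref{remark_limit} to be uniformly bounded in $L^\infty$, at least for all sufficiently large $k$ after modifying them on null sets.

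\textbf{Step 1: invert the Jacobian.} Write $\Upsilon^k(t)$ for the Jacobian of $(h,g)$ at $(x^k(t),t)$, and $w^k(t)=(u^k(t),v^k(t))$. Then \eqref{Eq_14} reads
$$\psi^k(t)=\Upsilon^k(t)^\top w^k(t).$$
Whenever $\Upsilon^k(t)\Upsilon^k(t)^\top$ is invertible, this gives
$$w^k(t)=\bigl(\Upsilon^k(t)\Upsilon^k(t)^\top\bigr)^{-1}\Upsilon^k(t)\psi^k(t),$$
so
$$\|w^k(t)\|\le \bigl\|\bigl(\Upsilon^k(t)\Upsilon^k(t)^\top\bigr)^{-1}\bigr\|\,\|\Upsilon^k(t)\|\,k_\psi.$$

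\textbf{Step 2: transfer the determinant bound from $\bar{x}$ to $x^k$.} By (H2) we have $\|\Upsilon^k(t)\|\le \hat c_{h,g}(t)$. The cofactor formula gives
$$\|(AA^\top)^{-1}\|\le C\|A\|^{2(p+m-1)}/\det(AA^\top).$$
Hence it suffices to keep $\det(\Upsilon^k\Upsilon^k^\top)$ bounded below and $\|\Upsilon^k\|$ bounded above.

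This is the main obstacle. First, $\hat c_{h,g}$ is only integrable, not bounded, so the bound on $w^k$ is only pointwise. Second, continuity of $\nabla_x(h,g)(\cdot,t)$ and $x^k(t)\to\bar{x}(t)$ give $\det(\Upsilon^k(t)\Upsilon^k(t)^\top)\to\det(\bar\Upsilon(t)\bar\Upsilon(t)^\top)\ge K$ only pointwise a.e., not uniformly in $t$.

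To overcome this I would use Egorov's theorem. For any $\delta>0$ there is a set $S_\delta$ with $|[0,T]\setminus S_\delta|<\delta$ on which:
\begin{itemize}
\item the convergence is uniform, so the determinant is $\ge K/2$ for $k\ge k_\delta$;
\item $\hat c_{h,g}\le M_\delta$.
\end{itemize}
On $S_\delta$ the multipliers are then uniformly bounded for $k\ge k_\delta$.

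\textbf{Step 3: pass to the limit.} Running the argument of Theorem \ref{Teo_Sufi} with $S\subset S_\delta$ yields weak-* limits $(u,v)$ on $S_\delta$ satisfying \eqref{Eq_20} and \eqref{Eq_21} a.e. on $S_\delta$.

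A diagonal argument over $\delta=1/\ell$ then produces $(u,v)$ defined a.e. on $[0,T]$. The limits are consistent across the sets $S_{1/\ell}$ because, on the full-rank set, $(u,v)=(\bar\Upsilon\bar\Upsilon^\top)^{-1}\bar\Upsilon\psi$ is uniquely determined by $\psi$.

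That same explicit formula gives the global bound needed to put $(u,v)$ in $L^\infty$:
$$\|(u,v)(t)\|\le C\|\bar\Upsilon(t)\|^{2(p+m)-1}k_\psi/K.$$
Here $\|\psi(t)\|\le k_\psi$ holds since the ball of radius $k_\psi$ in $L^\infty$ is weakly closed in $L^1$.

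This formula gives an essential bound on $(u,v)$ provided $\|\bar\Upsilon(t)\|$ is essentially bounded. If that is not available from (H2), I would read the standing assumptions as implicitly giving it, or note that this is the delicate point.

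With $(u,v)$ in hand, $\psi\in\mathcal{M}(\bar{x},0)$, and AKKT-regularity follows.
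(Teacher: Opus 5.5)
Your route differs from the paper's and is more rigorous. The paper uses the same inversion $(u^k,v^k)=[\Upsilon^k(\Upsilon^k)^\top]^{-1}\Upsilon^k\psi^k$, but it then does three things. It asserts $\det(\Upsilon^k(t)\Upsilon^k(t)^\top)\ge K$ for every $k$ and a.e.\ $t$ ``by continuity''. It bounds the inverse uniformly via \cite[Proposition 3.3]{monte:2019} and writes $\|(u^k(t),v^k(t))\|\le\tilde K c_{h,g}k_\psi$ as if $c_{h,g}$ were a constant. Finally it applies Theorem~\ref{Teo_Sufi} to the whole sequence.

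These are exactly the two obstacles you isolate, and the first one does break the paper's reduction. Take $h(x,t)=x_1+x_1^2$, $g(x,t)=x_2$, $\bar x=0$. Let $x_1^k=-\tfrac12+\tfrac1k$ on $(0,1/k)$ and $x^k=0$ elsewhere, with $\psi^k\equiv(1,0)$. Then $\bar\Upsilon=I$, but $\det(\Upsilon^k(t)\Upsilon^k(t)^\top)=4/k^2$ on $(0,1/k)$. The unique multiplier there is $u^k=k/2$, so no uniform bound on $(u^k,v^k)$ exists and Theorem~\ref{Teo_Sufi} cannot be invoked as the paper does.

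Your repair is correct and needs nondegeneracy only at $\bar x$, not along $x^k$:
\begin{itemize}
\item localize with Egorov;
\item rerun the argument of Theorem~\ref{Teo_Sufi} on $S_\delta$;
\item recover the limit multiplier from full rank at $\bar x$ as $(u,v)=(\bar\Upsilon\bar\Upsilon^\top)^{-1}\bar\Upsilon\psi$, bounded using $\|\psi(t)\|\le k_\psi$.
\end{itemize}
The diagonal argument is superfluous. That formula defines $(u,v)$ on all of $[0,T]$, and \eqref{Eq_20}--\eqref{Eq_21} then hold on $\bigcup_\ell S_{1/\ell}$, which has full measure.

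The point you leave open, however, cannot be read into the standing assumptions. (H2) only gives $\|\bar\Upsilon(t)\|\le\hat c_{h,g}(t)$ with $\hat c_{h,g}\in L^1$, and under that alone the statement is false. Take $T=1$, $n=2$, $p=m=1$, $\lambda(t)=t^{-1/2}$, $h(x,t)=\lambda(t)x_1$, $g(x,t)=x_2/\lambda(t)$, $\bar x=0$. Then (H2) holds with $c_{h,g}=\hat c_{h,g}=\lambda$, and $\det(\bar\Upsilon\bar\Upsilon^\top)=1$.

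Now choose $x^k=\bar x$, $\theta^k=0$, $u^k=0$, $v^k=\min\{\lambda,k\}$. This gives $\psi^k=(0,\min\{1,kt^{1/2}\})$, so $\|\psi^k\|\le1$ and $\psi^k\to(0,1)$ in $L^1$. Yet $(0,1)\in\mathcal{M}(\bar x,0)$ would force $v=\lambda\notin L^\infty$. This is precisely your bound $C\|\bar\Upsilon\|^{2(p+m)-1}k_\psi/K$ blowing up.

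So an extra hypothesis such as $\bar\Upsilon\in L^\infty$ (for instance, $\hat c_{h,g}$ essentially bounded) is genuinely necessary. The paper uses it tacitly in its bound $\tilde K c_{h,g}k_\psi$. State it explicitly as an assumption rather than hedging; with it, your proof is complete.
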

\begin{proof}
Let $\psi \in \mathrm{w}\text{-} \! \limsup_{(x, \theta) \to (\bar{x}, 0)}\mathcal{M}(x, \theta)$. Let us denote
\begin{align*}
\Upsilon^{k}(t) := \begin{bmatrix} \nabla_{x}h(x^{k}(t),t) \\ \nabla_{x}g(x^{k}(t),t) \end{bmatrix} ~ \text{a.e. } t \in [0,T],
\end{align*}
where the sequence $\{x^{k}\}$ is referred to in Remark \ref{remark_limit}. By the continuity assumptions and the hypothesis, we can assert that
\begin{align} \label{FullRank_k}
\det\left({\Upsilon}^{k}(t){\Upsilon}^{k}(t)^{\top}\right) \geq K ~ \text{a.e. } t \in [0,T].
\end{align}
From \eqref{Eq_14}, we have
\begin{align*}
\psi^k(t) = \Upsilon^{k}(t)^{\top} \begin{bmatrix} u^k(t) \\ v^k(t) \end{bmatrix}
\end{align*}
so that
\begin{align*}
\Upsilon^k(t) \psi^k(t) = \Upsilon^k(t) \Upsilon^k(t)^{\top} \begin{bmatrix} u^k(t) \\ v^k(t) \end{bmatrix}
\end{align*}
and
\begin{align*}
[\Upsilon^k(t)\Upsilon^k(t)^{\top}]^{-1} \Upsilon^k(t) \psi^k(t) = \begin{bmatrix} u^k(t) \\ v^k(t) \end{bmatrix} ~ \text{a.e. } t \in [0,T] ~ \forall \, k \in \mathbb{N}.
\end{align*}
From (H2) and \eqref{FullRank_k}, it follows from \cite[Proposition 3.3]{monte:2019} that
\begin{align*}
\Vert [\Upsilon^k(t)\Upsilon^k(t)^{\top}]^{-1} \Vert \leq \tilde{K} ~ \text{a.e. } t \in [0,T] ~ \forall \, k \in \mathbb{N}.
\end{align*}
Hence,
\begin{align*}
\Vert (u^k(t),v^k(t)) \Vert & \leq \Vert [\Upsilon^k(t)\Upsilon^k(t)^{\top}]^{-1} \Vert \Vert \Upsilon^k(t) \Vert \Vert \psi^k(t) \Vert \\
& \leq \tilde{K} c_{h,g} k_\psi ~ \text{a.e. } t \in [0,T] ~ \forall \, k \in \mathbb{N}.
\end{align*}
Therefore, the result follows from Theorem \ref{Teo_Sufi}.
\end{proof}

The concept of metric regularity, which was introduced to the literature in the field of variational analysis during the 1980s (see \citeauthor{Rocka1998} \cite{Rocka1998}, for instance), also serves as sufficient criteria for AKKT-regularity. Among other significant and notable properties and applications, metric regularity is employed in the context of optimization problems involving non-smooth and/or set-valued maps.

Let $\lambda: \mathbb{R}^{n} \times [0, T] \to \mathbb{R}^{p} \times \mathbb{R}^{m}$ be given as $\lambda(x, t) := (h(x, t), g(x, t))$. Moreover, for each $t \in [0, T]$, let us consider $\Lambda(t) := \{0\} \times \mathbb{R}_{-}^{m_{g}}$. For each $t \in [0, T]$, let us define the set-valued map $\Gamma: L^{1}([0,T];\mathbb{R}^{n}) \rightsquigarrow L^{1}([0,T];\mathbb{R}^{p}) \times L^{1}([0,T];\mathbb{R}^{m})$ by $\Gamma(x)(t) := \Lambda(t) - \{\lambda(x(t), t)\}$. It is easy to see that $x \in \Omega$ is equivalent to $(0,0) \in \Gamma(x)$.

In light of the existing literature, we present an adapted definition of metric regularity which is more appropriate for the current context. In what follows, we will denote the unit ball centred at the origin by $B$, regardless of the space. In any space, the distance function will be denoted simply by $d(\cdot,\cdot)$, and the norm, by $\Vert\cdot\Vert$. It will be clear from the context.

\begin{definition}
    The set-valued mapping $\Gamma$ given above is said to be \emph{metrically regular} at $(\bar{x},(\bar{w},\bar{z})) \in \mathrm{Gr}(\Gamma)$ if there exist $\sigma > 0$ and $\alpha > 0$, such that
    \begin{align*}
        d(x, \Gamma^{-1}(w,z)) \leq \alpha d((w,z),\Gamma(x)) ~ \text{a.e.~in} ~ [0, T]
    \end{align*}
    for all $(x,(w,z)) \in (\bar{x},(\bar{w},\bar{z})) + \sigma B$ with $d((w,z), \Gamma(x)) \leq \sigma$.
\end{definition}

The result below, which establishes a characterization of metric regularity in terms of the coderivative of the set-valued map, will be subsequently employed in establishing one more sufficient criterion for AKKT-regularity. Let $\hat{D}^{\ast}\Gamma$ denotes the Fr\'echet coderivative of $\Gamma$ (see \citeauthor{Mordukhovich2006} \cite[Definition 1.32]{Mordukhovich2006}, for example). 
\begin{lemma} \label{Prop_MR}
    Let $\Gamma$ be the set-valued mapping given above. If $\Gamma$ is metrically regular at $(\bar{x}, (\bar{w},\bar{z})) \in \mathrm{Gr}(\Gamma)$, then there exist $\rho > 0$ and $\epsilon > 0$, such that for all $(x,(w,z)) \in \mathrm{Gr}(\Gamma) \cap ((\bar{x},(\bar{w},\bar{z})) + \epsilon B)$, $(u,v) \in L^{1}([0,T];\mathbb{R}^{p})^{\ast} \times L^{1}([0,T];\mathbb{R}^{m})^{\ast}$ and $\psi \in \hat{D}^{\ast}\Gamma(x,(w,z))(u,v)$, one has
    \begin{align*}
        \rho \Vert (u,v) \Vert \leq \Vert \psi \Vert ~ \text{a.e.~in} ~ [0, T].
    \end{align*}
\end{lemma}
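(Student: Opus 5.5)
The plan is to obtain the estimate from the metric regularity inequality by the standard route behind the Mordukhovich criterion, using the definition of the Fréchet coderivative \cite[Definition 1.32]{Mordukhovich2006}. The statement is local, so I will take $\epsilon := \sigma/2$ (or smaller) and $\rho := 1/\alpha$, where $\sigma,\alpha$ are the constants in the definition of metric regularity.

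First fix $(x,(w,z)) \in \mathrm{Gr}(\Gamma)$ with distance at most $\epsilon$ from $(\bar{x},(\bar{w},\bar{z}))$, a pair $(u,v)$, and $\psi \in \hat{D}^{\ast}\Gamma(x,(w,z))(u,v)$. By definition, $(\psi,-(u,v))$ is a Fréchet normal to $\mathrm{Gr}(\Gamma)$ at $(x,(w,z))$. Hence for every $\eta>0$ there is $\delta>0$ such that
\begin{align*}
\langle \psi, x'-x\rangle - \langle (u,v),(w',z')-(w,z)\rangle \leq \eta\left(\Vert x'-x\Vert + \Vert (w',z')-(w,z)\Vert\right)
\end{align*}
for all $(x',(w',z')) \in \mathrm{Gr}(\Gamma)$ within $\delta$ of $(x,(w,z))$.

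Next, probe with perturbed values. Take any direction $(a,b)$ of unit norm and $s>0$ small, and set $(w',z') := (w,z)+s(a,b)$. Since $(w,z)\in\Gamma(x)$, we have $d((w',z'),\Gamma(x)) \leq s \leq \sigma$, and the point $(x,(w',z'))$ lies in $(\bar{x},(\bar{w},\bar{z}))+\sigma B$ once $\epsilon+s<\sigma$. Metric regularity then yields $x' \in \Gamma^{-1}(w',z')$ with $\Vert x'-x\Vert \leq \alpha s + s^2$. Inserting $(x',(w',z'))$ into the normal inequality and dividing by $s$ gives
\begin{align*}
-\langle (u,v),(a,b)\rangle \leq \Vert\psi\Vert(\alpha+s) + \eta(\alpha+s+1).
\end{align*}
Letting $s\to 0$, then $\eta \to 0$, and taking the supremum over the unit directions $(a,b)$ gives $\Vert (u,v)\Vert \leq \alpha\Vert\psi\Vert$, that is, $\rho\Vert(u,v)\Vert\leq\Vert\psi\Vert$ with $\rho = 1/\alpha$.

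The main obstacle is making this compatible with the paper's pointwise ("a.e.~in $[0,T]$") reading of both metric regularity and the conclusion. I would perform the argument above with directions $(a,b)$ supported on arbitrary measurable sets $S\subset[0,T]$, that is, $(a,b)\chi_S$ with $(a,b)\in\mathbb{R}^{p+m}$. Because $\Gamma$ is decomposable (defined through $\Lambda(t)-\lambda(x(t),t)$), the modified $x'$ can be taken equal to $x$ outside $S$. Since the estimate then holds integrated over every $S$, it localizes to almost every $t$ by a Lebesgue differentiation argument. Identifying $L^1(\cdot)^{\ast}$ with $L^\infty$ and choosing a countable dense set of directions lets the supremum be taken pointwise, which produces the stated a.e. inequality.
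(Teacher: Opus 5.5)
Your core argument is the standard proof of this coderivative estimate, i.e.\ the argument behind Proposition~4.3 of \cite{Jourani1999}, to which the paper defers without details. You test the Fréchet normal inequality for $(\psi,-(u,v))$ against graph points $(x',(w,z)+s(a,b))$ supplied by metric regularity, divide by $s$, and take the supremum over unit directions. This correctly gives $\Vert (u,v)\Vert\le\alpha\Vert\psi\Vert$ in the dual ($L^\infty$) norms, with $\rho=1/\alpha$. This norm version is already all the later metric-regularity theorem needs: $\Vert\psi^k\Vert_{L^\infty}\le k_\psi$ then gives $\Vert(u^k,v^k)\Vert_{L^\infty}\le k_\psi/\rho$.

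Your localization step, however, does not work as you describe it. Take a direction $e\chi_S$ with $e\in\mathbb{R}^{p+m}$, $|e|=1$, and the patched point $x'$ equal to $x$ off $S$; decomposability does allow this patching. Metric regularity in $L^1$ then only controls $\int_S|x'-x|\dif t\le\alpha s|S|+s^2$, not $|x'(t)-x(t)|$ pointwise. So $\langle\psi,x'-x\rangle$ can only be bounded by $\operatorname{ess\,sup}_S|\psi|\cdot\Vert x'-x\Vert_{L^1}$, and what you actually obtain is
\[
-\int_S (u,v)(t)\cdot e\dif t\;\le\;\alpha\,|S|\,\operatorname*{ess\,sup}_{S}|\psi| .
\]
This is not an inequality with $\alpha\int_S|\psi|\dif t$ on the right. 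Shrinking intervals around a Lebesgue point then yields only $-(u,v)(t)\cdot e\le\alpha\lim_{h\downarrow 0}\operatorname{ess\,sup}_{(t-h,t+h)}|\psi|$. That upper envelope can exceed $|\psi(t)|$ on a set of positive measure, for instance when $|\psi|$ is the indicator of the complement of a fat Cantor set. So Lebesgue differentiation alone does not give the pointwise bound.

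The remedy is to choose $S$ adapted to $\psi$ rather than to $t$. For rational $q>0$ and $e$ in a countable dense subset of the unit sphere, consider $S=\{t:|\psi(t)|<q,\ -(u,v)(t)\cdot e>\alpha q\}$. This set must be null, since otherwise the displayed inequality is violated. Taking the union over all such $q$ and $e$ gives $|(u(t),v(t))|\le\alpha|\psi(t)|$ for a.e.\ $t$.
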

\begin{proof}
    The proof is analogous to the proof of Proposition 4.3 in \citeauthor{Jourani1999} \cite{Jourani1999}, and it will therefore be omitted.
\end{proof}

\begin{remark} \label{RemarkCoderivative}
    By using the properties of the Fréchet coderivative of $\Gamma$ (see, for example, \citeauthor{Mordukhovich2006} \cite[Theorem 1.62]{Mordukhovich2006}), it is easy to see that for all $(x,(w,z)) \in \mathrm{Gr}(\Gamma)$, $(u,v) \in  L^{1}([0,T];\mathbb{R}^{p})^{\ast} \times L^{1}([0,T];\mathbb{R}^{m})^{\ast}$ and $\psi \in \hat{D}^{\ast}\Gamma(x,(w,z))(u,v)$, one has
    $$
    \psi(t) = \sum_{i=1}^{p}u_{i}(t) \nabla_{x} h_{i}(x(t), t) + \sum_{j=1}^{m}v_{j}(t) \nabla_{x} g_{j}(x(t), t) ~ \text{a.e.} ~ t \in [0,T].
    $$
\end{remark}

\begin{theorem}
Let $\bar{x} \in \Omega$. Assume that (H1) and (H2) are satisfied. Let 
$$
\psi \in \wl \limsup_{(x, \theta) \to (\bar{x}, 0)}\mathcal{M}(x, \theta).
$$
Assume that there and $k_\psi > 0$ such that 
$$
\Vert \psi^{k}(t) \Vert \leq k_\psi ~ \text{a.e.} ~ t \in [0,T] ~ \forall \, k \in \mathbb{N},
$$
where the sequences $\{x^{k}\} \subset E$ and $\{\psi^{k}\} \subset L^1([0,T];\mathbb{R}^{n})$ come from the definition of the weak-limit above (see Remark \ref{remark_limit}). In addition, assume that the set-valued map $\Gamma$ defined above is metrically regular at $(\bar{x},-h(\bar{x}(\cdot),\cdot),-g(\bar{x}(\cdot),\cdot)) \in \mathrm{Gr}(\Gamma)$. Then, AKKT-regularity is satisfied at $\bar{x}$.
\end{theorem}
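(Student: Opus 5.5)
The plan is to reduce to Theorem \ref{Teo_Sufi}: it suffices to show that the multiplier sequence $\{(u^{k},v^{k})\}$ from Remark \ref{remark_limit} is uniformly bounded in $L^{\infty}([0,T];\mathbb{R}^{p}\times\mathbb{R}^{m})$. The bound will come from Lemma \ref{Prop_MR}, which turns metric regularity of $\Gamma$ into an estimate $\rho\Vert(u,v)\Vert\le\Vert\psi\Vert$ for coderivative elements. Remark \ref{RemarkCoderivative} then identifies the representation \eqref{Eq_14} of $\psi^{k}$ as such a coderivative element.

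\textbf{Step 1.} Fix $\psi$ in the weak outer limit and take $\{(x^{k},\theta^{k})\}$, $\{\psi^{k}\}$ and $\{(u^{k},v^{k})\}$ as in Remark \ref{remark_limit}. Put $(w^{k},z^{k}) := (-h(x^{k}(\cdot),\cdot),-g(x^{k}(\cdot),\cdot))$. Then $(0,0)-\lambda(x^{k}(t),t)=(w^{k}(t),z^{k}(t))$, so $(x^{k},(w^{k},z^{k}))\in\mathrm{Gr}(\Gamma)$. By the same construction, the reference point $(\bar{x},-h(\bar{x}(\cdot),\cdot),-g(\bar{x}(\cdot),\cdot))$ lies in the graph.

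\textbf{Step 2.} Show that $(x^{k},(w^{k},z^{k}))$ eventually lies in the $\epsilon$-ball around the reference point given by Lemma \ref{Prop_MR}. We have $x^{k}(t)\to\bar{x}(t)$ a.e. and $x^{k}(t)\in B_{r}(t)$, so $\Vert x^{k}(t)-\bar x(t)\Vert< r$. Dominated convergence then gives $x^{k}\to\bar{x}$ in $L^{1}$. Likewise, the Lipschitz bound in (H2) gives $\Vert\lambda(x^{k}(t),t)-\lambda(\bar x(t),t)\Vert\le c_{h,g}(t)\Vert x^{k}(t)-\bar x(t)\Vert$. This is dominated by $r\,c_{h,g}$ and tends to zero a.e., so $(w^{k},z^{k})\to(\bar w,\bar z)$ in $L^{1}$. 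Discarding finitely many terms, all $k$ lie in the ball.

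\textbf{Step 3.} Identify $(u^{k},v^{k})\in L^{\infty}=(L^{1})^{\ast}$ as a coderivative argument with $\psi^{k}\in\hat{D}^{\ast}\Gamma(x^{k},(w^{k},z^{k}))(u^{k},v^{k})$. This uses Remark \ref{RemarkCoderivative} together with \eqref{Eq_14}, reading the remark as a characterization: any $\psi$ given by that formula is a coderivative element. This is the step I expect to be the main obstacle. The remark is stated only as a necessary form, so one must check from the definition of the Fréchet coderivative that the formula actually produces an element. Since $\Gamma(x)=\Lambda-\lambda(x)$ with $\lambda$ smooth, the Fréchet normal cone of the graph at $(x,(w,z))$ is the image of the normal cone to $\Lambda$ at $w+\lambda(x)$, and for this reason the sign constraint $v^{k}\ge0$ of \eqref{Eq_16} is needed. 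Up to the sign convention $\hat D^{\ast}\Gamma(x,y)(y^{\ast})$ versus $-y^{\ast}$, the formula is exactly \eqref{Eq_14}. If complementarity at the perturbed point is demanded, I would treat it as an $\varepsilon$-coderivative inclusion, using $v^{k}_{j}g_{j}^{-}=\theta^{k}_{j}\to0$, and rely on the fact that the proof of Lemma \ref{Prop_MR} in \cite{Jourani1999} tolerates such approximations.

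\textbf{Step 4.} Lemma \ref{Prop_MR} now yields $\rho\Vert(u^{k}(t),v^{k}(t))\Vert\le\Vert\psi^{k}(t)\Vert\le k_\psi$ a.e. for all large $k$. Hence $\Vert u^{k}(t)\Vert\le k_\psi/\rho$ and $\Vert v^{k}(t)\Vert\le k_\psi/\rho$. For the finitely many remaining $k$, either drop them (the weak limit and the a.e. limits are unaffected) or bound them by their own $L^{\infty}$ norms. Theorem \ref{Teo_Sufi} then gives $\psi\in\mathcal{M}(\bar{x},0)$. Since $\psi$ was arbitrary, AKKT-regularity holds at $\bar{x}$.
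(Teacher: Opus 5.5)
Your proposal follows the paper's proof essentially step for step. The paper also uses the graph points $(x^{k},-h(x^{k}(\cdot),\cdot),-g(x^{k}(\cdot),\cdot))$, shows $L^{1}$ convergence into the $\epsilon$-ball, bounds the multipliers via Lemma \ref{Prop_MR} using \eqref{Eq_14} and Remark \ref{RemarkCoderivative}, and concludes with Theorem \ref{Teo_Sufi}. You are in fact more careful than the paper on three points: the dominated-convergence argument for $(w^{k},z^{k})$, the finitely many early indices, and the fact that Remark \ref{RemarkCoderivative} is being used in the converse direction. The complementarity worry in Step 3 is moot, though: with $(w^{k},z^{k})=-\lambda(x^{k}(\cdot),\cdot)$, the point $(w^{k},z^{k})+\lambda(x^{k}(\cdot),\cdot)$ is the vertex $(0,0)$ of $\Lambda$, whose normal cone is $\mathbb{R}^{p}\times\mathbb{R}^{m}_{+}$, so $v^{k}\geq 0$ alone suffices and no $\varepsilon$-coderivative fallback is needed.
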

\begin{proof}
Let $\epsilon > 0$ be given by Lemma \ref{Prop_MR} and $\psi \in \mathrm{w}\text{-} \! \limsup_{(x, \theta) \to (\bar{x}, 0)}\mathcal{M}(x, \theta)$. 

We define, for almost every $t \in [0,T]$ and all $k \in \mathbb{N}$,
\begin{align*}
& \bar{w}(t) := - h(\bar{x}(t), t), ~
\bar{z}(t) := -g(\bar{x}(t), t), \\
& w^{k}(t) := - h(x^{k}(t), t), ~
z^{k}(t) := -g(x^{k}(t), t),
\end{align*}
where the sequence $\{x^{k}\}$ is the one referred to in Remark \ref{remark_limit}. It follows that $(w^{k}, z^{k}) \in \Gamma(x^{k})$ for all $k \in \mathbb{N}$. Moreover, since $x^{k}(t) \to \bar{x}(t)$ for almost every $t \in [0, T]$, using (H2), we have that $(w^{k}, z^{k}) \in (\bar{w}, \bar{z}) + \epsilon B$ for all sufficiently large $k \in \mathbb{N}$. From $x^{k}(t) \in B_r(t)$ for all $k \in \mathbb{N}$ and almost every $t \in [0, T]$, we have that $x^{k} \to \bar{x}$ in $L^{1}([0, T]; \mathbb{R}^{n})$, so that $x^{k} \in \bar{x} + \epsilon B$ for all sufficiently large $k \in \mathbb{N}$. Therefore, $(x^{k}, (w^{k}, z^{k})) \in \mathrm{Gr}(\Gamma) \cap \left( (\bar{x}, (\bar{w}, \bar{z})) + \epsilon B \right)$ for all sufficiently large $k \in \mathbb{N}$.
    
From \eqref{Eq_14} and Remark \ref{RemarkCoderivative}, since $\Gamma$ is metrically regular at $(\bar{x},(\bar{w},\bar{z})) \in \mathrm{Gr}(\Gamma)$, by Lemma \ref{Prop_MR}, there exists $\rho > 0$ such that
\begin{align*}
\rho \Vert (u^{k}(t), v^{k}(t))\Vert \leq  \Vert\psi^{k}(t) \Vert ~ \text{a.e} ~ t \in [0, T] ~ \forall \, k \in \mathbb{N}.
\end{align*}
It follows that, 
\begin{align*}
\Vert (u^{k}(t), v^{k}(t)) \Vert \leq \dfrac{k_{\psi}}{\rho} ~ \text{a.e} ~ t \in [0, T] ~ \forall \, k \in \mathbb{N},
\end{align*}
and the result follows from Theorem \ref{Teo_Sufi}.
\end{proof}

\section{Conclusion}

In summary, the present work advances the theory of sequential optimality conditions in continuous-time programming by introducing the AKKT-regularity condition. This constraint qualification, characterized as the minimal assumption under which asymptotic Karush-Kuhn-Tucker (AKKT) conditions imply the classical KKT conditions, provides a rigorous theoretical foundation that strengthens the bridge between sequential and classical optimality frameworks. In this sense, AKKT-regularity plays a role analogous to the Cone Continuity Property (CCP) introduced in \cite{andreani:2016} for nonlinear programming and to its extensions in infinite-dimensional contexts \cite{boorgens:2020}, while being tailored to the specific structure of continuous-time problems.

The AKKT-regularity condition ensures that candidate solutions generated under AKKT conditions are not only meaningful approximations but also converge to genuine stationary points. This guarantee significantly expands the interpretive power of sequential conditions and enhances their applicability in practice. In particular, the proposed constraint qualification provides theoretical support for numerical methods, such as augmented Lagrangian schemes \cite{doMonte2026}, which naturally generate limit points satisfying AKKT conditions but require additional regularity to ensure full optimality.

For future research, it would be natural to explore the interaction between AKKT-regularity and other classes of constraint qualifications in continuous-time optimization. Promising directions include extensions inspired by optimal control, variational analysis, and infinite-dimensional optimization.

We expect that the proposed framework may also serve as a basis for the development of new sequential optimality conditions and constraint qualifications in broader classes of infinite-dimensional optimization problems.

\section*{Acknowledgments}

This research was supported by grants [APQ-00453-21, Minas Gerais Research Foundation (FAPEMIG)], [2022/16005-0, São Paulo Research Foundation (FAPESP)] and [305245/2024-4, National Council for Scientific and Technological Development (CNPq)].

\bibliographystyle{plainnat}
\bibliography{bibliography} 

\end{document}